\documentclass[10pt]{amsart}

\usepackage{amsmath,amsthm,amsfonts,amssymb,enumerate,mathrsfs,setspace,verbatim,latexsym}


\newtheorem{thm}{Theorem}

\newtheorem{lem}[thm]{Lemma}
\newtheorem{prop}[thm]{Proposition}
\newtheorem{cor}[thm]{Corollary}
\numberwithin{thm}{section}
\numberwithin{equation}{section}

\theoremstyle{definition}

\newtheorem{conj}[thm]{Conjecture}

\newcommand{\intg}{\mathbb Z}
\newcommand{\nat}{\mathbb N}

\newcommand{\PP}{\mathcal P}

\renewcommand{\mod}[1]{{\ifmmode\text{\rm\ (mod~$#1$)}\else\discretionary{}{}{\hbox{ }}\rm(mod~$#1$)\fi}}
\newcommand{\ep}{\varepsilon}

\newcommand{\li}{\mathop{\rm li}}

\begin{document}
\title[Polynomials related to the Goldbach conjecture]{Polynomials whose reducibility is related to the Goldbach conjecture}
\author[P. Borwein \and K.K. Choi \and G. Martin \and C.L. Samuels]{Peter Borwein \and Kwok-Kwong Stephen Choi \and
Greg Martin \and Charles L. Samuels}
\address{Simon Fraser University, Department of Mathematics, 8888 University Drive, Burnaby, BC V5A 1S6, Canada}
\address{University of British Columbia, Department of Mathematics, 1984 Mathematics Road, Vancouver, BC V6T 1Z2, Canada}
\thanks{Research of all authors is supported in part by NSERC of Canada}
\date{\today}

\begin{abstract}
  We introduce a collection of polynomials $F_N$, associated to each positive integer $N$, whose divisibility properties yield a reformulation of the Goldbach conjecture.  
  While this reformulation certainly does not lead to a resolution of the conjecture, it does suggest two natural generalizations for which we provide 
  some numerical evidence.  As these polynomials $F_N$ are independently interesting, we further explore their basic properties, giving, among other things, asymptotic estimates on the 
  growth of their coefficients.
\end{abstract}

\maketitle

\section{Introduction} \label{PolyIntro}

Let $\PP$ denote the set of odd primes.  One of the oldest unsolved problems in mathematics concerns the set
$\PP + \PP = \{p + q: p,q\in \PP\}.$

\begin{conj}[Goldbach Conjecture]
      If $N > 4$ is an even integer, then $N\in\PP + \PP$.
\end{conj}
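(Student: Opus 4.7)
The final statement is Goldbach's conjecture itself, open since 1742; I do not see a route to an unconditional proof, so the most honest plan is to describe the two classical analytic frameworks and to indicate precisely where each breaks down. Both attack the representation function
\[
r(N) = \#\{(p,q)\in\PP\times\PP : p+q = N\},
\]
and both aim to show $r(N)>0$ for every even $N>4$.

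The first approach is the Hardy--Littlewood circle method. I would set $S(\alpha) = \sum_{p\le N}(\log p)\, e^{2\pi i \alpha p}$, so that a weighted version of $r(N)$ equals $\int_0^1 S(\alpha)^2 e^{-2\pi i \alpha N}\, d\alpha$. On the major arcs one expects, and can establish conditionally on GRH in suitable forms, a main term of size $N \cdot \mathfrak{S}(N)$ with singular series $\mathfrak{S}(N) = \prod_{p\mid N}\tfrac{p-1}{p-2}\prod_{p\nmid N}\bigl(1 - \tfrac{1}{(p-1)^2}\bigr)$. The main obstacle is the minor arcs: for a binary sum one needs the minor-arc contribution to be $o(N)$, but the best general pointwise bound $\sup_{\mathfrak m}|S(\alpha)| \ll N^{1-\delta}$ only yields $\int_{\mathfrak m}|S(\alpha)|^2\, d\alpha \ll N^{2-2\delta}$ after pulling out an $L^\infty$ factor, which is too large by a polynomial factor in $N$. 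This is exactly why Vinogradov's method succeeds for the ternary problem (the extra $L^1$ factor in $\int|S|^3$ absorbs the loss) but not for two primes.

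The second approach is a sieve attack in the style of Brun, Selberg, and Chen: one sieves the set $\{N-p : p \le N\}$ by primes up to a cutoff $N^{1/u}$, aiming to force $N-p$ to have few prime factors. Chen's theorem pushes this to $N-p \in \PP \cup \{p_1 p_2\}$ for infinitely many even $N$, and also yields $r(N) \gg N/(\log N)^2$ almost everywhere. The structural barrier here is the parity problem of Selberg: a purely combinatorial sieve cannot distinguish integers with an odd number of prime factors from those with an even number, so the sieve alone cannot convert $N-p \in \PP\cup\{p_1p_2\}$ into $N-p \in \PP$. Bypassing parity demands genuinely analytic input --- a zero-free region, a Bombieri--Vinogradov statement beyond the exponent $1/2$, or a nontrivial bilinear-form estimate for the von Mangoldt function --- and nothing presently available is strong enough.

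Since both frameworks stall at essentially the same point, namely insufficient control over the oscillation of the primes below the main-term scale, my realistic plan --- and, transparently, that of the paper --- is not to prove Goldbach directly but to recast it as a divisibility statement about the polynomials $F_N$. This transfers the difficulty into a finite algebraic condition amenable to structural and numerical investigation, and lets one state precise generalizations whose evidence can be gathered even while the original conjecture remains out of reach.
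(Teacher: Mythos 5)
The statement you were asked about is the Goldbach conjecture itself, which the paper states as an open conjecture and makes no attempt to prove; your refusal to offer a proof is therefore the correct response, and your summary of where the circle method and sieve methods break down (the minor-arc $L^2$ loss for binary problems, and the parity obstruction) accurately reflects the state of the art. Your closing observation --- that the paper's actual contribution is the reformulation via divisibility of $F_N$ by $\Phi_N$ rather than any attack on the conjecture --- matches the paper's own framing exactly.
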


If $N$ is any positive integer, we say that the {\it Goldbach conjecture holds for $N$} if $N\in\PP + \PP$.  Otherwise, we say the {\it the Goldbach conjecture fails for $N$}.
Of course, we make no attempt here to prove the Goldbach conjecture, however we wish to study a related collection of polynomials.  In order to construct these polynomials, we let
$\chi_\PP:\nat\to\{0,1\}$ denote the indicator function of $\PP$.  That is,
\begin{equation*}
      \chi_\PP(n) = \left\{\begin{array}{ll}
          1 & \mathrm{if}\ n\ \mathrm{is\ an\ odd\ prime,} \\
          0 & \mathrm{otherwise}.
          \end{array}\right.
\end{equation*}
Furthermore, for each positive integer $N$, we define
\begin{equation*}
      R(N) = \sum_{n=1}^{N-1}\chi_\PP(n)\chi_\PP(N-n)
\end{equation*}
so that $R(N)$ counts the number of ways to write $N$ as a sum of two odd primes.  We note that $R(N) = 0$ if and only if $N\not\in\PP+\PP$.
To each positive integer $N$, we associate a polynomial $F_N\in\intg[x]$ given by
\begin{equation*}
       F_N(z) = \sum_{k=0}^{N-1} \left( \sum_{n=1}^{N-1}\chi_\PP(n)z^{kn}\right)^2.
\end{equation*}
Our first result shows that the $F_N(z)$ are closely related to the Goldbach problem.  In this article, we will always use $\Phi_N$ to denote the $N$th
cyclotomic polynomial.

\begin{thm} \label{FDivisibility}
  Suppose that $N$ is a positive integer.  Then $\Phi_N$ divides $F_N$ if and only if the Goldbach conjecture fails for $N$.
\end{thm}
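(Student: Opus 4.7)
The plan is to show that $\Phi_N \mid F_N$ in $\intg[x]$ is equivalent to $F_N(\zeta) = 0$ for any primitive $N$th root of unity $\zeta$, and then to reduce $F_N(\zeta)$ directly to $N\cdot R(N)$ via the orthogonality relations for characters on $\intg/N\intg$. Since failure of Goldbach for $N$ is precisely the condition $R(N)=0$, this will give the result.

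First I would note that $F_N\in\intg[x]\subset\rat[x]$ and that $\Phi_N$ is the minimal polynomial over $\rat$ of every primitive $N$th root of unity, so $\Phi_N\mid F_N$ holds if and only if $F_N(\zeta)=0$ for some (equivalently, every) primitive $N$th root of unity $\zeta$. Set $g(z)=\sum_{n=1}^{N-1}\chi_\PP(n)z^n$, so that $F_N(z)=\sum_{k=0}^{N-1}g(z^k)^2$. Fix a primitive $N$th root of unity $\zeta$ and expand
\begin{equation*}
F_N(\zeta)=\sum_{k=0}^{N-1}\sum_{n=1}^{N-1}\sum_{m=1}^{N-1}\chi_\PP(n)\chi_\PP(m)\,\zeta^{k(n+m)}.
\end{equation*}
Interchanging the order of summation and applying the standard identity $\sum_{k=0}^{N-1}\zeta^{k(n+m)}=N$ when $N\mid n+m$ and $0$ otherwise, I would obtain
\begin{equation*}
F_N(\zeta)=N\sum_{\substack{1\le n,m\le N-1\\ N\mid n+m}}\chi_\PP(n)\chi_\PP(m).
\end{equation*}

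Next I would observe that if $1\le n,m\le N-1$, then $2\le n+m\le 2N-2$, so $N\mid n+m$ forces $n+m=N$. Hence the double sum collapses to $\sum_{n=1}^{N-1}\chi_\PP(n)\chi_\PP(N-n)=R(N)$, giving the clean identity $F_N(\zeta)=N\cdot R(N)$. Since $N>0$, we conclude that $F_N(\zeta)=0$ if and only if $R(N)=0$, which in turn is equivalent to $N\notin\PP+\PP$, i.e., to the Goldbach conjecture failing for $N$. Combined with the first observation, this completes the argument.

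I do not expect a genuine obstacle: the proof is essentially a discrete Fourier / orthogonality computation, and the only minor point to be careful about is the range restriction $1\le n,m\le N-1$, which is exactly what forces the congruence $n+m\equiv 0\pmod N$ to be the equality $n+m=N$ and thereby connects the sum to $R(N)$ rather than to some twisted variant.
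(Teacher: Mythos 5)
Your proposal is correct and follows essentially the same route as the paper: expand $F_N(\zeta)$, swap the order of summation, apply the orthogonality identity for $\sum_k \zeta^{k(m+n)}$, and note that the range $1\le m,n\le N-1$ forces $m+n=N$, yielding $F_N(\zeta)=NR(N)$. Your explicit remark that divisibility by $\Phi_N$ is equivalent to vanishing at a primitive $N$th root of unity (via minimality of $\Phi_N$ over $\rat$) is a detail the paper leaves implicit, but the argument is the same.
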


In other words, Theorem \ref{FDivisibility} reformulates the Goldbach conjecture in terms of the divisibility properties of $F_N$.
Since no odd integer can be written as a sum of odd primes, we observe immediately that $\Phi_N$ divides $F_N$ for all odd $N$.
Naively, it is reasonable to conjecture that $F_N$ is irreducible for all even integers $N > 4$.  Unfortunately, $F_N$ always has at least 
one non-trivial irreducible factor.

\begin{thm} \label{2NCyclo}
  If $N$ is a positive integer then $\Phi_{2N}$ divides $F_N$.
\end{thm}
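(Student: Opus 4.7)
The plan is to show that every primitive $2N$-th root of unity is a root of $F_N$, which suffices since $\Phi_{2N}$ is its minimal polynomial over $\rat$. Fix such a root $\zeta$ and set $g_k(z) = \sum_{n=1}^{N-1}\chi_\PP(n) z^{kn}$, so that $F_N = \sum_{k=0}^{N-1} g_k^2$. Then $g_k(\zeta) = \sum_p \zeta^{kp}$, where $p$ ranges over odd primes less than $N$.

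The key observation is that $\zeta^N = -1$, so $\zeta^{Np} = (-1)^p = -1$ for every odd prime $p$. This yields the sign flip $g_{k+N}(\zeta) = -g_k(\zeta)$, and consequently $g_{k+N}(\zeta)^2 = g_k(\zeta)^2$. Thus one can double the range of summation in $F_N(\zeta)$ and then expand and swap:
\[
2F_N(\zeta) \;=\; \sum_{k=0}^{2N-1} g_k(\zeta)^2 \;=\; \sum_{p,q} \sum_{k=0}^{2N-1} \zeta^{k(p+q)},
\]
where the outer sum is over ordered pairs $p, q$ of odd primes less than $N$. By orthogonality of $2N$-th roots of unity, the inner geometric sum equals $2N$ when $2N \mid (p+q)$ and vanishes otherwise.

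To finish, I would observe that for any admissible pair one has $0 < p + q \le 2(N-1) < 2N$, so the divisibility condition $2N \mid (p+q)$ fails for every term, forcing $F_N(\zeta) = 0$. Degenerate cases $N \le 3$ cause no trouble since the set of odd primes less than $N$ is empty and $F_N$ is the zero polynomial. I do not foresee a serious obstacle: the entire argument rests on the interplay between the sign flip provided by $\zeta^N = -1$, the parity of odd primes, and a standard orthogonality computation. If anything, the only step requiring attention is verifying the inequality $p+q < 2N$, which is automatic from the range of $n$ in the definition of $F_N$.
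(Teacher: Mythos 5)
Your proof is correct and follows essentially the same route as the paper: evaluate $F_N$ at a primitive $2N$th root of unity, expand the square and swap sums, and kill every term by orthogonality using the parity of $p+q$ together with the bound $0 < p+q < 2N$. The only cosmetic difference is that you exploit the parity by doubling the $k$-range via the symmetry $g_{k+N}(\zeta)^2 = g_k(\zeta)^2$ and summing over all $2N$th roots of unity, whereas the paper halves the exponent and sums over $N$th roots of unity directly; both reduce to the same vanishing condition.
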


Early numerical evidence seems to suggest that $F_N/\Phi_{2N}$ is, in fact, irreducible for all even integers $N>4$.
If this is the case, then the Goldbach conjecture would follow.  Similarly, it appears that, for odd integers $N>5$, we have that $F_N/(\Phi_N\Phi_{2N})$ 
is irreducible.  Although this is not relevant to the Goldbach conjecture, we find it independently interesting.

\begin{conj} \label{GeneralGoldbach}
      If $N>5$ is an integer then the following conditions hold.
      \begin{enumerate}[(i)]
      \item\label{Evens} If $N$ is even, then $F_N/\Phi_{2N}$ is irreducible.
      \item If $N$ is odd, then $F_N/\Phi_N\Phi_{2N}$ is irreducible.
      \end{enumerate}
\end{conj}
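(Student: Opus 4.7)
My plan is to decompose the conjecture into two subproblems: (a) the classification of all cyclotomic factors of $F_N$, and (b) the irreducibility of the complementary factor $G_N$ obtained after dividing out those factors.

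For (a), I would study $F_N(\zeta_d)$ for $\zeta_d$ a primitive $d$-th root of unity. Writing $g(z) = \sum_p z^p$ with the sum ranging over odd primes $p < N$, we have $F_N(\zeta_d) = \sum_{k=0}^{N-1} g(\zeta_d^k)^2$. The inner sum $g(\zeta_d^k)$ depends only on $\gcd(k,d)$ together with the distribution of primes in residue classes modulo $d/\gcd(k,d)$, so I would analyse the vanishing of $F_N(\zeta_d)$ according to this decomposition. Theorems \ref{FDivisibility} and \ref{2NCyclo} already identify $d\in\{N,2N\}$ as cases producing cyclotomic factors; the goal here is to rule out all other $d$, which reduces to showing that certain exponential sums indexed by primes do not conspire to cancel completely. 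Standard techniques for bounding such sums (Vinogradov-type estimates, or explicit formulas involving Dirichlet characters) should suffice provided $N$ is large enough, with small $N$ handled by direct computation.

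For (b), I would attack irreducibility of $G_N$ by $p$-adic methods. The strategy is to compute the Newton polygon of $G_N$ at a carefully chosen prime and hope that its slopes, together with their horizontal lengths, force irreducibility via a Dumas-style argument. Primes near $N$ are natural candidates since the coefficient structure of $F_N$ is controlled by prime pairs $(p,q)$ with $p+q$ in specific residue classes. Should the Newton polygon approach fail, I would pivot to a reduction-mod-$\ell$ argument: find a prime $\ell$ for which $F_N \bmod \ell$ factors as $\Phi_{2N}\cdot H$ (or $\Phi_N\Phi_{2N}\cdot H$) with $H$ irreducible or having an irreducible factor of degree $\deg G_N$. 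Alternatively, one could compute the Galois group of $G_N$ over $\rat$ by reductions modulo small primes and attempt to verify transitivity on roots, or even the presence of a cycle of prime length close to $\deg G_N$.

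The principal obstacle is the sensitive dependence of $F_N$ on the precise set of primes below $N$: the coefficients encode Goldbach-type correlations, so any irreducibility argument must be robust against this erratic arithmetic data. Classical irreducibility criteria (Eisenstein, Dumas, Perron, Schur-type) do not easily accommodate such dependence, and the degree of $F_N$ grows roughly like $2(N-1)p^*$ where $p^*$ is the largest odd prime below $N$, making exhaustive $p$-adic or mod-$\ell$ analysis computationally onerous. I expect this is why the authors present the statement only as a conjecture supported by numerical evidence: an unconditional proof likely requires a new structural insight, perhaps realising $G_N$ as the characteristic polynomial of an explicit arithmetic operator or as the minimal polynomial of an algebraic number whose Galois conjugates can be described independently of the fine distribution of primes below $N$.
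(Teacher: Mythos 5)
This statement is Conjecture \ref{GeneralGoldbach}: the paper does not prove it, and offers only numerical verification for $N\le 50$ together with the observation that part (i) would imply the Goldbach conjecture. Your submission is likewise not a proof but a research program, and every step of it is left open, so there is no argument to check for correctness --- the gap is the entire content. Two points deserve emphasis. First, your step (a) is not a tractable preliminary: by Theorem \ref{FDivisibility}, the vanishing of $F_N(\zeta_N)$ for even $N$ is \emph{exactly} the statement $R(N)=0$, and more generally Corollary \ref{StrongerDirection} shows that $\Phi_M\mid F_N$ for any $M\mid N$ already forces the Goldbach conjecture to fail for $N$. So ``ruling out all other cyclotomic factors'' is not something that ``Vinogradov-type estimates should suffice'' for; it contains the Goldbach conjecture itself as a special case, and no known exponential-sum technology closes that. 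Second, step (b) is strictly harder still: as the paper notes after stating the conjecture, $F_N/\Phi_{2N}$ could be reducible without being divisible by $\Phi_N$, so irreducibility of the cofactor is a genuinely stronger assertion than Goldbach, and none of the criteria you list (Newton polygons, Dumas, reduction mod $\ell$, Galois-group computations) is known to apply to polynomials whose coefficients are defined by the erratic set of primes below $N$ --- a difficulty you yourself acknowledge in your final paragraph.

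Your closing assessment is the correct one: the authors present this as a conjecture precisely because no proof is available, and any complete argument along your lines would in particular resolve the Goldbach conjecture. If you want to contribute something checkable here, the realistic options are extending the numerical verification beyond $N=50$, or proving partial structural results (for instance, that $F_N/\Phi_{2N}$ has no further cyclotomic factors $\Phi_d$ with $d\nmid 2N$, which does not obviously reduce to Goldbach and might be accessible).
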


As we have noted, Conjecture \ref{GeneralGoldbach} \eqref{Evens} would imply the Goldbach conjecture.  However, the converse is possibly false.  Indeed,
$F_N/\Phi_{2N}$ could be reducible but still not divisible by $\Phi_{N}$.  As such, we should view Conjecture \ref{GeneralGoldbach} as being significantly harder 
than the Goldbach conjecture, and therefore, not likely within reach using current techniques.  Nonetheless, we find it interesting to see the Goldbach conjecture in this context.

As evidence in favor of Conjecture \ref{GeneralGoldbach}, we have found that it holds for all $N\leq 50$.  
For even $N$, the first few polynomials $F_N/\Phi_N$ are given in the following list.
\begin{align*}
	F_6/\Phi_{12} & = {z}^{46}+{z}^{44}-{z}^{40}-{z}^{38}+3\,{z}^{36}+4\,{z}^{34}+{z}^{32}-3\,{z}^{30}-2\,{z}^{28}+3\,{z}^{26}\\
        & \qquad +5\,{z}^{24}+2\,{z}^{22} -2\,{z}^{18}-{z}^{16}+2\,{z}^{14}+5\,{z}^{12}+3\,{z}^{10}-{z}^{8}-3\,{z}^{6}+4\,{z}^{2}+4 \\
	F_8/\Phi_{16} & =  {z}^{90}-{z}^{82}+3\,{z}^{76}+{z}^{74}-3\,{z}^{68}-{z}^{66}+2\,{z}^{64}+4\,{z}^{62}+3\,{z}^{60}+{z}^{58}\\
        & \qquad -2\,{z}^{56}-4\,{z}^{54}+2\,{z}^{52}-{z}^{50}+5\,{z}^{48}+4\,{z}^{46}-2\,{z}^{44}+4\,{z}^{42}-{z}^{40}\\
        & \qquad -4\,{z}^{38}+2\,{z}^{36}-2\,{z}^{34}+6\,{z}^{32} +4\,{z}^{30}+{z}^{28}+2\,{z}^{26}-4\,{z}^{24}-2\,{z}^{18}\\
        & \qquad +9\,{z}^{16}+3\,{z}^{12}+3\,{z}^{10}-7\,{z}^{8}+{z}^{6}+9\\
	F_{10}/\Phi_{20} & = {z}^{118}+{z}^{116}-{z}^{108}-{z}^{106}+{z}^{104}+{z}^{102}+2\,{z}^{100}+3\,{z}^{98}+{z}^{96}-{z}^{94}\\
        & \qquad -{z}^{92} -{z}^{90} +{z}^{86}+{z}^{84}+4\,{z}^{82}+4\,{z}^{80}+2\,{z}^{76}+2\,{z}^{74}-{z}^{72}-{z}^{70}\\
        & \qquad -2\,{z}^{66}+2\,{z}^{64}+9\,{z}^{62}+5\,{z}^{60} +4\,{z}^{56} -4\,{z}^{52}+3\,{z}^{48}+{z}^{44}+7\,{z}^{42}+8\,{z}^{40}\\
        & \qquad +2\,{z}^{38}+{z}^{34}-3\,{z}^{30}+{z}^{28}+3\,{z}^{26}+{z}^{24}+6\,{z}^{22}+8\,{z}^{20}+2\,{z}^{16}\\
        & \qquad +4\,{z}^{14}-3\,{z}^{12}-4\,{z}^{10}+3\,{z}^{8}+{z}^{6}+9\,{z}^{2}+9.\\
\end{align*}
Now we give the analogous list but for odd $N$.
\begin{align*}
	F_{7}/(\Phi_{7}\Phi_{14}) & = {z}^{48}-{z}^{46}+{z}^{38}+{z}^{36}-{z}^{34}-{z}^{32}+3\,{z}^{28}-3\,{z}^{26}+2\,{z}^{24}\\
        & \qquad +{z}^{20}-{z}^{18} -2\,{z}^{16}+3\,{z}^{14}-{z}^{10}+{z}^{8}+{z}^{6}-4\,{z}^{2}+4\\
	F_{9}/(\Phi_{9}\Phi_{18}) & = {z}^{100}-{z}^{94}+{z}^{86}+2\,{z}^{84}+{z}^{82}-{z}^{80}-2\,{z}^{78}-{z}^{76}+3\,{z}^{72}\\
        & \qquad +4\,{z}^{68}-{z}^{66}+{z}^{64}-4\,{z}^{62}+3\,{z}^{58}+{z}^{54}-2\,{z}^{52}+4\,{z}^{50}+4\,{z}^{48}\\
        & \qquad -{z}^{46}-{z}^{44}-5\,{z}^{42}+3\,{z}^{40} +6\,{z}^{36}-2\,{z}^{34}+{z}^{32}+{z}^{30}+4\,{z}^{28}\\
        & \qquad -{z}^{26}-4\,{z}^{24}-2\,{z}^{22}+2\,{z}^{20}+7\,{z}^{18}-{z}^{16} -{z}^{14}+2\,{z}^{12}+3\,{z}^{10}\\
        & \qquad +2\,{z}^{8}-8\,{z}^{6}+9\\
	F_{11}/(\Phi_{11}\Phi_{22}) & = {z}^{120}-{z}^{118}+{z}^{106}-{z}^{104}+2\,{z}^{100}-{z}^{98}-{z}^{96}+{z}^{92}-{z}^{90}\\
        & \qquad +2\,{z}^{88}-2\,{z}^{86} +{z}^{84}-{z}^{82}+3\,{z}^{80}-3\,{z}^{74}+4\,{z}^{70}-4\,{z}^{68}+2\,{z}^{66}\\
        & \qquad +{z}^{64}-2\,{z}^{62}+4\,{z}^{60}-2\,{z}^{58} +{z}^{52}-4\,{z}^{46}+4\,{z}^{44}-{z}^{42}+4\,{z}^{40}\\
        & \qquad -2\,{z}^{38}+{z}^{36}-2\,{z}^{34}-{z}^{32}+4\,{z}^{30}+2\,{z}^{28} -5\,{z}^{26} -4\,{z}^{24}+6\,{z}^{22}\\
        & \qquad +2\,{z}^{20}-{z}^{18}+{z}^{16}-2\,{z}^{14}+{z}^{10}+{z}^{8}+{z}^{6}-9\,{z}^{2}+9.\\
\end{align*}
Indeed, we have found that the right hand sides on the above lists are all irreducible over $\intg$.

Because of their relevance to the Goldbach conjecture, it may also be interesting to study the number of roots of $F_N$ that lie on the unit circle. 
In view of Theorem \ref{2NCyclo}, it is clear that $F_N$ has at least $\varphi(2N)$ such roots.   For even integers $N>4$, if $F_N$ has
no other roots on the unit circle, then the Goldbach conjecture would follow from Theorem \ref{FDivisibility}.  Our numerical evidence suggests
this to be the case.  Furthermore, when $N$ is odd, we know that $F_N$ must, in fact, have at least $\varphi(2N) + \varphi(N)$ roots on the unit circle. 
Again, our evidence suggests that there are no others.  Also, the identity
\begin{equation*}
	\varphi(2N) = \begin{cases}
		2\varphi(N) & \mbox{if } N \mbox{ is even} \\
		\varphi(N) & \mbox{if } N \mbox{ is odd}.
		\end{cases}
\end{equation*}
holds for all positive integers $N$.  So we pose the following strengthening of the Goldbach conjecture.
 
 \begin{conj} \label{UnitCircleRoots}
 	If $N > 5$ is an integer then $F_N$ has precisely $2\varphi(N)$ roots on the unit circle.
 \end{conj}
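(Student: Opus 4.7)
The plan is to reduce the conjecture to showing that a specific quotient polynomial has no roots on the unit circle. By Theorem~\ref{2NCyclo}, $\Phi_{2N}$ always divides $F_N$, contributing $\varphi(2N)$ roots on the unit circle. When $N$ is odd, $N$ cannot be written as a sum of two odd primes (by parity), so Theorem~\ref{FDivisibility} forces $\Phi_N \mid F_N$, contributing another $\varphi(N)$ roots. In both cases the displayed identity for $\varphi(2N)$ accounts for exactly $2\varphi(N)$ unit-circle roots. Writing $F_N = D_N \cdot H_N$ with $D_N$ the product of these known cyclotomic factors, the task is to prove $H_N$ has no root on the unit circle.

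First I would verify that $H_N$ does not vanish at any \emph{other} root of unity. Interchanging summations yields the expansion
\[
F_N(z) \;=\; \sum_{j=2}^{2N-2} R(j)\,T_j(z), \qquad T_j(z) \;=\; 1+z^j+z^{2j}+\cdots+z^{(N-1)j},
\]
where $R(j)$ is the number of ordered pairs of odd primes below $N$ that sum to $j$; note $R(j)=0$ whenever $j$ is odd. Evaluating at a primitive $d$th root of unity $\zeta$ with $d\mid N$ gives $F_N(\zeta) = N \sum_{d\mid j} R(j)$, and for any $d$ strictly between $1$ and $N$ this sum contains at least one small even multiple of $d$ for which Goldbach-type representations are known to exist. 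A parallel computation at primitive $2d$th roots with $2d\neq 2N$ disposes of the remaining candidates. Hence any further unit-circle root of $H_N$ would have to be non-cyclotomic.

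The main obstacle is to rule out non-cyclotomic unit-circle roots of $H_N$. Under Conjecture~\ref{GeneralGoldbach} the polynomial $H_N$ is irreducible over $\intg$, but this alone does not suffice: a Salem polynomial is irreducible and has roots on the unit circle alongside roots off it, so Kronecker's theorem does not apply. What seems to be needed is a direct argument that the nonnegative trigonometric polynomial
\[
|F_N(e^{i\theta})|^2 \;=\; F_N(e^{i\theta})\,F_N(e^{-i\theta})
\]
vanishes on $[0,2\pi)$ only at those angles $\theta = \pi k/N$ corresponding to primitive $2N$th (and, for odd $N$, primitive $N$th) roots of unity. Since $F_N(z)=\sum_k P_k(z)^2$ with $P_0$ equal to the positive constant $C := \#\{p < N : p\text{ odd prime}\}$, one hopes to use $C^2$ as a baseline and control the oscillatory contributions from the prime exponential sums $P_k(e^{i\theta}) = \sum_{p<N} e^{ikp\theta}$ with $k\ge 1$.

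I expect this last step to be the true obstruction. When $\theta$ is well-approximated by a rational with small denominator, the sums $P_k(e^{i\theta})$ can be anomalously large, so the cancellation among the $P_k^2$ that is needed to keep $F_N(e^{i\theta})$ away from zero need not occur for straightforward reasons. A Vinogradov-style major/minor arc analysis could control most $\theta$, but the conjecture demands the \emph{exact} count of unit-circle roots, so purely analytic bounds seem unlikely to close the argument; a structural input—perhaps exploiting the sum-of-squares form $F_N=\sum_k P_k^2$ in combination with the above expansion in terms of $R(j)$—will probably be required at the end.
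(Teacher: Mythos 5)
The statement you set out to prove is posed in the paper as a conjecture (explicitly described there as harder than the Goldbach conjecture itself), and the paper offers no proof of it --- only numerical verification for $N\le 50$ and the observation that it is a consequence of Conjecture \ref{GeneralGoldbach}. Your proposal is likewise not a proof, as you yourself acknowledge, so the honest verdict is that both you and the paper stop short; the useful comparison is between your reduction strategy and the paper's remarks.

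Your accounting of the guaranteed unit-circle roots ($\varphi(2N)$ from Theorem \ref{2NCyclo}, plus $\varphi(N)$ more from Theorem \ref{FDivisibility} when $N$ is odd, totalling $2\varphi(N)$ in either parity) matches the paper exactly. But you then make a genuine error in dismissing the irreducibility route. You write that irreducibility of the quotient $H_N$ ``does not suffice'' because of Salem polynomials. Salem polynomials are reciprocal, and reciprocality is precisely the point the paper exploits: if an irreducible integer polynomial $P$ has a root $\alpha$ with $|\alpha|=1$, then $1/\alpha=\bar\alpha$ is a Galois conjugate of $\alpha$, so inversion permutes the roots of $P$ and $P$ must be reciprocal up to sign. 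The quotients $F_N/\Phi_{2N}$ and $F_N/(\Phi_N\Phi_{2N})$ are visibly non-reciprocal (leading coefficient $1$ against constant term $4$ or $9$ in the displayed examples), so Conjecture \ref{GeneralGoldbach} really does imply Conjecture \ref{UnitCircleRoots} with no analytic input whatsoever. That conditional implication is the only ``proof'' content the paper has, and your proposal discards it. A second, smaller gap: your claim that $F_N(\zeta_d)>0$ for every divisor $1<d<N$ because the relevant sum ``contains at least one small even multiple of $d$ for which Goldbach-type representations are known to exist'' fails for large divisors of $N$; for instance when $M=N/2$ is odd, Theorem \ref{PropertyList} yields only $F_N(\zeta_M)\ge NR(N)$, whose positivity is again the Goldbach conjecture for $N$ itself. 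Your closing assessment --- that an exact count of unit-circle roots is beyond major/minor-arc estimates --- is fair, but the intended path to this conjecture runs through irreducibility plus non-reciprocality, not through lower bounds on $|F_N(e^{i\theta})|$.
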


Similar to our note above, the converse of Conjecture \ref{UnitCircleRoots} is not necessarily true.  $F_N$ could have many roots on the unit circle
while still not being divisible by $\Phi_N$.  Once again, this conjecture should be regarded as more difficult than the Goldbach conjecture.

We also observe that Conjecture \ref{UnitCircleRoots} is a consequence of Conjecture \ref{GeneralGoldbach}.
Indeed, for the case of even $N$, if $F/\Phi_{2N}$ is irreducible and has a root on the unit circle, then it must be reciprocal, which it
certainly is not.  Similar remarks apply to $F/(\Phi_N\Phi_{2N})$ when $N$ is odd.

We have computed the number of roots of $F_N$ on the unit circle for $N\leq 50$ and have found that Conjecture \ref{UnitCircleRoots} holds for those $F_N$.
This complete list is given in Table \ref{fig:FRoots} including the number of roots inside, on and outside the unit circle for each $F_N$.

\begin{table}
\caption{Location of roots of $F_N$}\label{fig:FRoots}
\centering
\begin{tabular}{ c | c | c | c | c}
\hline\hline
$N$ & $2\varphi(N)$ & $[|z|<1 \quad |z| = 1\quad  |z| > 1]$ \\
\hline
$6$ & $4$ & $[16 \quad 4 \quad 30]$ \\
$7$ & $12$ & $[4\quad 12\quad 44]$ \\
$8$ &  $8$ & $[24\quad 8\quad 66]$ \\
$9$ & $12$ & $[8\quad 12\quad 92]$ \\
$10$ & $8$ & $[16\quad 8\quad 102]$ \\
$11$ & $20$ &  $[16\quad 20\quad 104]$\\
12 & 8& [48 \quad  8 \quad   186]\\
13 & 24 & [40\quad 24\quad 200]\\
 14& 12& [40\quad 12\quad 286]\\
15& 16& [40\quad 16\quad 308]\\
16& 16& [36\quad 16\quad 338]\\
17& 32& [36\quad 32\quad 348]\\
18 & 12& [56\quad 12\quad 510]\\
19& 36& [40\quad 36\quad 536]\\
20& 16& [80\quad 16\quad 626]\\
21& 24& [60\quad 24\quad 676]\\
22& 20& [64\quad 20\quad 714]\\
23& 44& [56\quad 44\quad 736]\\
24& 16& [92\quad 16\quad 950]\\
25& 40& [84\quad 40\quad 980]\\
26& 24& [100\quad 24\quad 1026]\\
27& 36& [108\quad 36\quad 1052]\\
28& 24& [92\quad 24\quad 1126]\\
29& 56& [100\quad 56\quad 1132]\\
30& 16& [132\quad 16\quad 1534]\\
 31& 60& [128\quad 60\quad 1552]\\
32& 32& [144\quad 32\quad 1746]\\
33& 40& [136\quad 40\quad 1808]\\
34& 32& [144\quad 32\quad 1870]\\
35& 48& [160\quad 48\quad 1900]\\
36& 24& [168\quad 24\quad 1978]\\
37& 72& [136\quad 72\quad 2024]\\
38& 36& [180\quad 36\quad 2522]\\
39& 48& [172\quad 48\quad 2592]\\
40& 32& [184\quad 32\quad 2670]\\
 41& 80& [176\quad 80\quad 2704]\\
42& 24& [200\quad 24\quad 3138]\\
43& 84& [184\quad 84\quad 3176]\\
44& 40& [244\quad 40\quad 3414]\\
 45& 48& [252\quad 48\quad 3484]\\
46& 44& [228\quad 44\quad 3598]\\
47& 92& [244\quad 92\quad 3620]\\
48& 32& [288\quad 32\quad 4098]\\
49& 84& [260\quad 84\quad 4168]\\
50& 40& [264\quad 40\quad 4302]\\

\end{tabular}
\end{table}

It is worth noting that, in our construction of $F_N$, the set of odd primes may be replaced with any subset of $\nat$.  In this way, one may
attempt to prove theorems analogous to those stated above.  One such example, which is of particular interest in number theory, arises in the following way.

The Liouville function $\lambda:\nat\to\{-1,1\}$ is the completely multiplicative function such that $\lambda(p) = -1$ at every prime $p$.
Now define the set
\begin{equation*}
  \mathcal L = \{n\in \nat: \lambda(n) = -1\}.
\end{equation*}
It is a direction of our future research to examine the analogs of $F_N$ that are obtained by using the above construction with $\mathcal L$ in place of $\PP$.
Perhaps this strategy can yield a proof that every positive even integer $N>2$ satisfies $N\in\mathcal L + \mathcal L$.  On the surface, such a result appears to 
be easier than the Goldbach conjecture, and therefore, is possibly within reach.

One can also consider weighted forms of $F_N$.  Similar to the study of the prime number theorem, instead of using the above indicator
function of $\PP$, we use the weighted form
\begin{equation*}
  \overset{\sim}{\chi}_{\PP}(n) =\begin{cases}  \log n & \mbox{ if $n \in \PP $,} \\ 0 & \mbox{ otherwise} \end{cases}
\end{equation*}
and define the corresponding polynomials $\overset{\sim}{F}_N$ by
\begin{equation*}
  \overset{\sim}{F}_N(z) =\sum_{k=0}^{N-1}\left(\sum_{n=1}^{N-1}\overset{\sim}{\chi}_{\PP}(n)z^{kn}\right)^2.
\end{equation*}
It is clear that the $\overset{\sim}{F}_N(z)$ do not have integer coefficients, so we might expect different types of results regarding these polynomials.
Nonetheless, we believe they yield another interesting route for future research.

In the following two sections, we examine a series of basic properties of the polynomials $F_N$.  Specifically in section \ref{Coefficients}, we produce estimates on the size of the
coefficients of $F_N$, as well as asymptotic formulae for certain sums of their coefficients.  The remaining sections are devoted to the proofs of our results.
 
\section{Properties of the polynomials $F_N$} \label{Properties}

Now that we understand the relevance of the polynomials $F_N$ to the Goldbach conjecture, we consider some of their additional properties.    
We begin with the following result regarding their symmetry.

\begin{thm} \label{Symmetry}
  If $N$ is a positive integer then $F_N(z) = F_N(-z)$.
\end{thm}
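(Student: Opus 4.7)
The plan is to exploit the parity of odd primes together with the fact that each summand appears squared. Writing
\[
A_k(z) = \sum_{n=1}^{N-1}\chi_\PP(n)z^{kn},
\]
we have $F_N(z) = \sum_{k=0}^{N-1} A_k(z)^2$, so it suffices to show that $A_k(-z)^2 = A_k(z)^2$ for each $k$.

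The first step is to observe that the only nonzero contributions to $A_k(z)$ come from $n$ with $\chi_\PP(n)=1$, i.e.\ from odd primes $n$. In particular, every surviving exponent $n$ in the defining sum is odd. Therefore, upon substituting $-z$ for $z$, each monomial $\chi_\PP(n)z^{kn}$ acquires a factor $(-1)^{kn} = (-1)^k$, since $n$ is odd. This factor is independent of $n$, so
\[
A_k(-z) = \sum_{n=1}^{N-1}\chi_\PP(n)(-1)^{kn}z^{kn} = (-1)^k A_k(z).
\]

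The second step is trivial squaring: $A_k(-z)^2 = (-1)^{2k}A_k(z)^2 = A_k(z)^2$. Summing over $k$ from $0$ to $N-1$ gives $F_N(-z)=F_N(z)$, as required.

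There is essentially no obstacle here; the proof hinges entirely on the single observation that every odd prime is odd, so that the parity of the exponent $kn$ depends only on $k$. The result would fail if $\chi_\PP$ were replaced by an arbitrary indicator function, but holds verbatim for any variant of the construction (such as the weighted form $\widetilde{F}_N$ or the Liouville analog restricted to odd integers) whose support lies in the odd integers.
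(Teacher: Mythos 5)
Your proof is correct and follows exactly the same route as the paper's: substitute $-z$, note that $\chi_\PP(n)$ vanishes unless $n$ is odd so that $(-1)^{kn}\chi_\PP(n)=(-1)^k\chi_\PP(n)$, pull out the factor $(-1)^k$, and observe it disappears upon squaring. Nothing to add.
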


Theorem \ref{Symmetry} certainly implies that if $\Phi_N(z)$ divides $F_N(z)$ then so does $\Phi_N(-z)$.  Furthermore, we know that if $M$ is an odd integer
then $\Phi_{2M}(z) = \Phi_M(-z)$.  Combining these observations with Theorem \ref{FDivisibility}, we obtain the following corollary.

\begin{cor} \label{GoldbachEquivalencyGeneral}
  If $M$ is an odd integer and $N = 2M$ then the following conditions are equivalent.
  \begin{enumerate}[(i)]
  \item\label{NDiv} $\Phi_N$ divides $F_N$.
  \item\label{MDiv} $\Phi_M$ divides $F_N$.
  \item\label{GoldbachDiv} The Goldbach conjecture fails for $N$.
  \end{enumerate}
\end{cor}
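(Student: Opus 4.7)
The plan is to derive this corollary essentially for free from the two preceding results. The equivalence of conditions (i) and (iii) is precisely the content of Theorem~\ref{FDivisibility}, so the only real task is to show that (i) and (ii) are equivalent. To link them, I would invoke the classical cyclotomic identity
\begin{equation*}
\Phi_{2M}(z) = \pm\,\Phi_M(-z),
\end{equation*}
which holds for every odd positive integer $M$. For odd $M>1$, both sides are monic of degree $\varphi(2M)=\varphi(M)$, and the primitive $2M$-th roots of unity are exactly the negatives of the primitive $M$-th roots of unity, so the polynomials coincide; for $M=1$ the polynomials $\Phi_1(z)=z-1$ and $\Phi_2(z)=z+1$ are negatives of one another, which is enough for divisibility purposes.

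With this identity in hand, the argument is a single manipulation. Assume $\Phi_N(z)=\Phi_{2M}(z)$ divides $F_N(z)$, and write $F_N(z)=\Phi_{2M}(z)\,Q(z)$ for some $Q\in\intg[z]$. Substituting $-z$ for $z$ and applying Theorem~\ref{Symmetry} yields
\begin{equation*}
F_N(z)=F_N(-z)=\Phi_{2M}(-z)\,Q(-z),
\end{equation*}
and since $\Phi_{2M}(-z)$ is an associate of $\Phi_M(z)$ by the identity above, we conclude that $\Phi_M\mid F_N$. The reverse implication follows by interchanging the roles of $\Phi_M$ and $\Phi_{2M}$ in exactly the same argument, since $z\mapsto -z$ is an involution. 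Composing the now-established equivalence (i)$\Leftrightarrow$(ii) with Theorem~\ref{FDivisibility} delivers the full three-way equivalence.

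There is no substantive obstacle: the corollary truly is a one-line consequence of the symmetry $F_N(z)=F_N(-z)$ together with a standard fact about cyclotomic polynomials. The only points worth a second glance are the sign ambiguity in the cyclotomic identity (harmless, since we care only about divisibility) and the degenerate case $M=1$, where $N=2$ and an inspection of the definition shows that $F_2$ is identically zero, so all three conditions hold vacuously.
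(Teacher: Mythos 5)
Your proof is correct and follows essentially the same route as the paper's: the equivalence (i)$\Leftrightarrow$(iii) is quoted from Theorem~\ref{FDivisibility}, and (i)$\Leftrightarrow$(ii) is deduced from the symmetry $F_N(z)=F_N(-z)$ of Theorem~\ref{Symmetry} together with the identity $\Phi_{2M}(z)=\Phi_M(-z)$ for odd $M$. Your extra care about the sign ambiguity and the degenerate case $M=1$ is a minor refinement the paper omits, not a difference in approach.
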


Suppose now that, for any positive integer $M$, $\zeta_M$ is a primitive $M$th root of unity.
We may view Corollary \ref{GoldbachEquivalencyGeneral} as examining the value of $F_N(\zeta_M)$ when $M$ is a certain divisor of $N$.  Next, we consider
the values of $F_N(\zeta_M)$ when $M$ is an arbitrary divisor of $M$.  We write $[x]$ to denote the largest integer less than or equal to $x$.

\begin{thm} \label{PropertyList}
  If $N>4$ is an integer and $M\mid N$ then the following conditions hold.
  \begin{enumerate}[(i)]
    \item\label{OddDivisor} If $M$ is odd then
      \begin{equation*}
        F_N(\zeta_M) \geq N \sum_{n=1}^{[N/2M]} R(2nM).
      \end{equation*}
    \item \label{EvenDivisor} If $M$ is even then
      \begin{equation*}
        F_N(\zeta_M) \geq N\sum_{n=1}^{N/M} R(nM).
      \end{equation*}
\end{enumerate}
\end{thm}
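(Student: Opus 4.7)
The plan is to compute $F_N(\zeta_M)$ directly by expanding the square and invoking the orthogonality relation for the characters $k \mapsto \zeta_M^{k(m+n)}$ on $\{0,1,\ldots,N-1\}$; the two inequalities then emerge as lower bounds obtained after discarding certain non-negative terms.

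First I would expand the square in the definition of $F_N$ and interchange the order of summation to obtain
$$F_N(\zeta_M) = \sum_{m,n=1}^{N-1}\chi_\PP(m)\chi_\PP(n)\sum_{k=0}^{N-1}\zeta_M^{k(m+n)}.$$
Because $M \mid N$, each $\zeta_M^{m+n}$ is an $N$th root of unity, so the inner geometric sum equals $N$ whenever $M \mid m+n$ and vanishes otherwise. This yields the exact identity
$$F_N(\zeta_M) = N \sum_{\substack{1 \le m,n \le N-1 \\ M \mid m+n}}\chi_\PP(m)\chi_\PP(n),$$
which incidentally shows $F_N(\zeta_M) \ge 0$.

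Next I would group the pairs $(m,n)$ by their sum $s = m+n$. For each $s$ with $2 \le s \le N$, every decomposition $s = m+n$ with $1 \le m \le s-1$ lies in the admissible rectangle $\{1,\dots,N-1\}^2$, so the contribution of that value of $s$ is exactly $R(s)$. For $N < s \le 2N-2$ the admissible decompositions form a possibly smaller but still non-negative collection; discarding those terms yields
$$F_N(\zeta_M) \ge N \sum_{\substack{2 \le s \le N \\ M \mid s}} R(s).$$

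Finally I would split according to the parity of $M$. Since any two odd primes sum to an even integer, $R(s) = 0$ for every odd $s$, so only the even multiples of $M$ contribute to the sum above. When $M$ is even, every multiple $s = nM$ with $1 \le n \le N/M$ is automatically even, giving case (ii). When $M$ is odd, the even multiples are precisely $s = 2nM$ with $1 \le n \le [N/(2M)]$, giving case (i). I do not expect a genuine obstacle here; the argument is a direct character-orthogonality calculation, and the only point demanding care is the bookkeeping that separates the contributions with $s \le N$ (which yield the full $R(s)$) from those with $s > N$ (which are dropped), together with the parity split that converts a sum over multiples of $M$ into a sum over multiples of either $M$ or $2M$.
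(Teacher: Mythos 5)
Your proposal is correct and follows essentially the same route as the paper: expand the square, use the geometric-sum orthogonality to reduce to pairs with $M\mid m+n$, observe that sums $s\le N$ contribute exactly $R(s)$ while sums $s>N$ are nonnegative and can be discarded, and finish with the parity split. The only cosmetic difference is that the paper evaluates the inner sum by writing $k=iM+k'$ and summing over a full period of length $M$, whereas you sum the geometric series over $k=0,\dots,N-1$ directly; these are the same computation.
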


Applying Theorems \ref{PropertyList} and \ref{FDivisibility} immediately yield the following simpler lower bound on $F_N(\zeta_M)$.

\begin{cor} \label{SimpleZetaLower}
  If $N>4$ is an integer and $M\mid N$, then $F_N(\zeta_M) \geq NR(N)$ with equality when $M=N$.
\end{cor}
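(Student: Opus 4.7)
The plan is to invoke Theorem \ref{PropertyList} directly, noting that in each case $R(N)$ itself appears as one term of the (nonnegative) lower bound, so $F_N(\zeta_M) \geq NR(N)$ drops out. The equality at $M = N$ then needs a brief separate check.

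First I would split into cases by parity. If $M$ is even, Theorem \ref{PropertyList}\eqref{EvenDivisor} gives $F_N(\zeta_M) \geq N\sum_{n=1}^{N/M}R(nM)$, and the choice $n = N/M$ exhibits $R(N)$ as one summand; nonnegativity of $R$ yields the desired bound. If $M$ is odd but $N$ is even, then $2M \mid N$ (since $M$ is odd and $M \mid N$), so $N/(2M)$ is a positive integer and $R(N) = R\bigl(2(N/(2M))M\bigr)$ appears among the summands of Theorem \ref{PropertyList}\eqref{OddDivisor}; the same nonnegativity argument finishes this case. Finally, if both $M$ and $N$ are odd, then $R(N) = 0$ since an odd integer cannot be written as a sum of two odd primes, and the bound $F_N(\zeta_M) \geq 0 = NR(N)$ is immediate from the nonnegativity of the summands in \eqref{OddDivisor}.

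For equality at $M = N$, I distinguish by parity of $N$. If $N$ is odd, the Goldbach conjecture fails vacuously for $N$, so Theorem \ref{FDivisibility} gives $\Phi_N \mid F_N$, hence $F_N(\zeta_N) = 0 = NR(N)$. If $N$ is even I would compute $F_N(\zeta_N)$ directly by expanding
\begin{equation*}
F_N(\zeta_N) = \sum_{m=1}^{N-1}\sum_{n=1}^{N-1}\chi_\PP(m)\chi_\PP(n)\sum_{k=0}^{N-1}\zeta_N^{k(m+n)},
\end{equation*}
and applying the orthogonality identity, which makes the innermost sum equal $N$ if $N \mid m+n$ and $0$ otherwise. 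Since $2 \leq m+n \leq 2N-2$ forces $m+n = N$ as the only multiple of $N$ in range, only pairs with $m+n = N$ contribute, and the right-hand side collapses to exactly $NR(N)$. The only real obstacle is clerical: keeping the parity cases straight and verifying that at $M = N$ the expansion has precisely one admissible value of $m+n$, which is exactly the situation in which the inequality supplied by Theorem \ref{PropertyList} degenerates to equality.
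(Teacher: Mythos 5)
Your proposal is correct and follows essentially the same route as the paper: the same three parity cases applied to Theorem \ref{PropertyList}, isolating the summand equal to $R(N)$ (or noting $R(N)=0$ when $N$ is odd), with the equality at $M=N$ coming from the computation already carried out in the proof of Theorem \ref{FDivisibility}. The paper leaves the equality case implicit by citing Theorem \ref{FDivisibility}, whereas you spell it out; your parity split there is harmless but unnecessary, since the orthogonality computation gives $F_N(\zeta_N)=NR(N)$ for every $N$.
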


The case $M=N$ may not be the only case of equality in Corollary \ref{SimpleZetaLower}.  In fact, if $M$ is odd and $N = 2M$, then it can be shown 
that $F_N(\zeta_M) = NR(N)$ as well.  This result also provides a strengthening of one direction of Theorem \ref{FDivisibility}.
If $\Phi_M$ ever divides $F_N$, then it follows from Corollary \ref{SimpleZetaLower} that $R(N) = 0$.  In other words, we have established the following statement.

\begin{cor} \label{StrongerDirection}
   Suppose $N>4$ is an integer and $M\mid N$.  If $\Phi_M$ divides $F_N$ then the Goldbach conjecture fails for $N$.
\end{cor}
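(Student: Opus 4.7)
The plan is to invoke Corollary \ref{SimpleZetaLower} directly, since the present statement is really just its contrapositive form regarding vanishing at a root of unity. The hypothesis $\Phi_M \mid F_N$ will be used in only one way: it guarantees that the primitive $M$-th root of unity $\zeta_M$ is a root of $F_N$, so that $F_N(\zeta_M) = 0$.

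Concretely, I would proceed as follows. First, fix a primitive $M$-th root of unity $\zeta_M$ and note that, by the hypothesis, $\Phi_M(\zeta_M) = 0$ and $\Phi_M \mid F_N$ in $\intg[z]$, so $F_N(\zeta_M) = 0$. Second, because $M \mid N$ and $N > 4$, Corollary \ref{SimpleZetaLower} applies and gives
\begin{equation*}
0 \;=\; F_N(\zeta_M) \;\geq\; N \, R(N).
\end{equation*}
Since $N$ is positive and $R(N)$ is a nonnegative integer (being a count of representations), this forces $R(N) = 0$. Finally, by the definition of $R(N)$, the vanishing $R(N) = 0$ is precisely the statement that $N \notin \PP + \PP$, i.e.\ that the Goldbach conjecture fails for $N$.

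There is no real obstacle here: once Corollary \ref{SimpleZetaLower} is in hand, the argument is a one-line chain $\Phi_M \mid F_N \Rightarrow F_N(\zeta_M) = 0 \Rightarrow R(N) = 0$. The only thing worth double-checking is that the hypotheses of Corollary \ref{SimpleZetaLower} are met, namely $M \mid N$ and $N > 4$, both of which are given. Thus the entire content of this corollary lies in Theorem \ref{PropertyList}; the present statement merely packages the one-sided implication of Theorem \ref{FDivisibility} that holds for every divisor $M$ of $N$, not just $M = N$ itself.
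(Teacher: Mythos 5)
Your proof is correct and is essentially identical to the paper's: both deduce $F_N(\zeta_M)=0$ from the hypothesis and then invoke Corollary \ref{SimpleZetaLower} to force $R(N)=0$. No issues.
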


The converse of Corollary \eqref{StrongerDirection} is certainly false.  Otherwise, $\Phi_1$ would divide $F_N$ for every odd $N$, and it certainly 
does not.  When restricted to even integers, it is likely true, but only because the Goldbach conjecture would imply that the hypothesis is always false.
In fact, in view of Theorem \ref{FDivisibility}, such a statement is equivalent to the Goldbach conjecture.

\section{The coefficients of $F_N$} \label{Coefficients}

 Let us now turn our attention to understanding the coefficients of $F_N$.  For this purpose, we note that $\deg F_N \leq 2(N-1)^2$ and write
\begin{equation*}
      F_N(z) = \sum_{m = 0}^{2(N-1)^2}a_{N,m}z^m.
\end{equation*}
It is easy to see that the constant term in $F_N$ is given by the formula
\begin{equation*}
a_{N,0} = \left(\sum_{n=1}^{N-1}\chi_\PP(n)\right)^2 = (\pi(N-1)-1)^2
\end{equation*}
where $\pi(N-1)$ denotes the number of primes $p\leq N-1$.   Furthermore, by multiplying out the terms in the definition of $F_N$,
we obtain an explicit formula for all other coefficients of $F_N$.

\begin{thm} \label{Fcoeffs}
      Let $N$ be a positive integer.  We have that
      \begin{equation*}  \label{ExplicitA}
          a_{N,m}  =  \sum_{\substack{d\mid m \\ m/d < N}}\sum_{n=\max\{0,d-N\}+1}^{\min\{N,d\}-1}\chi_\PP(n)\chi_\PP(d-n)
	\end{equation*}
      for all $0<m\leq 2(N-1)^2$.
\end{thm}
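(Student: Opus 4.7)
The plan is to directly expand the square in the definition of $F_N$ and collect like powers of $z$. I start from
\[
F_N(z) = \sum_{k=0}^{N-1}\sum_{n_1=1}^{N-1}\sum_{n_2=1}^{N-1}\chi_\PP(n_1)\chi_\PP(n_2)\, z^{k(n_1+n_2)},
\]
and reparametrize the inner double sum by $d = n_1+n_2$ and $n = n_1$. The constraints $1 \le n_1 \le N-1$ and $1 \le n_2 = d-n \le N-1$ translate to $\max(1,\,d-N+1) \le n \le \min(N-1,\,d-1)$, which is identical to the range $\max(0,\,d-N)+1 \le n \le \min(N,\,d)-1$ appearing in the statement. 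Writing $c_d = \sum_n \chi_\PP(n)\chi_\PP(d-n)$ over this range then gives the compact form $F_N(z) = \sum_{k=0}^{N-1}\sum_{d} c_d\, z^{kd}$.

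Next, for fixed $m > 0$, I read off the coefficient of $z^m$. A term $c_d z^{kd}$ contributes to $a_{N,m}$ precisely when $kd = m$ with $0 \le k \le N-1$. Since $m > 0$ forces $k \ge 1$, the relevant pairs are parametrized by positive divisors $d$ of $m$ with $k = m/d$ satisfying $m/d \le N-1$; as $m/d$ is an integer, this is equivalent to the condition $m/d < N$ in the theorem. Summing $c_d$ over all such divisors then reproduces exactly the stated right-hand side.

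The one point requiring care is justifying that the outer sum may range over \emph{all} divisors $d\mid m$ with $m/d<N$, without restricting to $2 \le d \le 2(N-1)$. For $d = 1$ the range becomes $1 \le n \le 0$, which is empty; and for $d \ge 2N-1$ the range becomes $d-N+1 \le n \le N-1$, again empty. Hence $c_d = 0$ for $d$ outside $[2,\,2(N-1)]$, so including these divisors contributes nothing and the unrestricted divisor sum coincides with the restricted one. There is no substantive obstacle here: the argument is a direct expansion and reindexing, and the only real book-keeping is reconciling the edge cases of the inner range with the unrestricted divisor sum in the theorem statement.
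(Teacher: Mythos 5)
Your proposal is correct and follows essentially the same route as the paper: expand the square, reindex the inner double sum by $d=n_1+n_2$, and read off the coefficient of $z^m$ from the condition $kd=m$ with $k=m/d\le N-1$, i.e.\ $m/d<N$. Your extra bookkeeping about the inner range being empty for $d=1$ and $d\ge 2N-1$ is a nice explicit check that the paper leaves implicit.
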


Among other things, Theorem \ref{Fcoeffs} shows that 
\begin{equation*}
      a_{N,m} \leq \sum_{d\mid m} R(d)
\end{equation*}
with equality whenever $0< m \leq N$.  We can rephrase the case of equality by saying that
\begin{equation} \label{CoeffIdentity}
	a_{N,m} = \sum_{d\mid m} R(d)
\end{equation}
whenever $0<m \leq N$.  It is worth noting that the right hand side of \eqref{CoeffIdentity} does not depend on $N$, so that the non-constant coefficients
of the $F_N(z)$ stabilize as $N$ tends to infinity.  More specifically, if we write $a(m) = a_{N,m}$ for some $N\geq m$, the polynomials $F_N(z) - a_{N,0}$
converge coefficient-wise to the power series
\begin{equation*}
	F(z) = \sum_{n=1}^\infty a(m)z^m.
\end{equation*}
It is straightforward to verify that $F(z)$ has radius of convergence $1$, and the sequence $\{F_N(z) - a_{N,0}\}$ converges uniformly to $F(z)$
on compact subsets of the unit disk.

Let us now examine the individual terms $a(m)$.  If $m$ is odd, then all divisors of $m$ are also
odd, so we conclude that $a(m)=0$.  Hence, it is only interesting to consider the situation where $m$ is even, in which case the
coefficients seem to behave in a rather subtle way.  However, we can obtain lower bounds
in relation to other famous arithmetic functions.  Before proceeding, we recall that $\omega(n)$ denotes the number of distinct prime
factors of $n$ and $d(n)$ denotes the number of divisors of $n$.

\begin{thm} \label{aLowerBounds}
      If $m>1$ is an integer then
      \begin{equation} \label{UnconditionalLower}
          a(2m) \geq \omega(m) -  \left\{\begin{array}{ll}
                  1 & \mathrm{if}\ m\equiv 2\ \mathrm{mod}\ 4, \\
                  0 & \mathrm{otherwise}.
                  \end{array}\right.
      \end{equation}
      Moreover, if the Goldbach conjecture is true, then
      \begin{equation} \label{UnderGoldbach}
          a(2m) \geq d(m) - \left\{\begin{array}{ll}
                  2 & \mathrm{if}\ m\ \mathrm{is\ even}, \\
                  1 & \mathrm{otherwise}.
                  \end{array}\right.
      \end{equation}
\end{thm}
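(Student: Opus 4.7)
The plan is to start from the identity \eqref{CoeffIdentity}, which gives $a(2m) = \sum_{d \mid 2m} R(d)$. Since $R(d) \geq 0$ for all $d$, any collection of divisors of $2m$ on which we can verify $R(d) \geq 1$ produces a lower bound on $a(2m)$ equal to the size of that collection. Two elementary facts restrict which $d$ can contribute: $R(d) = 0$ whenever $d$ is odd (a sum of two odd primes is even), and $R(2) = R(4) = 0$ by direct inspection. Thus only even divisors $d \geq 6$ can provide positive contributions.

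For the unconditional bound \eqref{UnconditionalLower}, I would supply two explicit families of divisors with $R(d) \geq 1$. First, for every odd prime $p \mid m$, the divisor $d = 2p$ of $2m$ satisfies $R(2p) \geq 1$ via the representation $2p = p + p$. Second, if $4 \mid m$ then $d = 8$ is a divisor of $2m$ with $R(8) \geq 1$ from $8 = 3 + 5$, and $8 \neq 2p$ for any odd prime $p$. A case split on $m \bmod 4$ then concludes: if $m$ is odd the first family yields $\omega(m)$ distinct divisors; if $4 \mid m$ the first family yields $\omega(m) - 1$ divisors (since $2 \mid m$) and $d = 8$ adjoins one more, for a total of $\omega(m)$; if $m \equiv 2 \pmod 4$ only the first family is available, contributing $\omega(m) - 1$. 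These three cases match \eqref{UnconditionalLower} exactly.

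For the conditional bound \eqref{UnderGoldbach}, Goldbach guarantees $R(d) \geq 1$ for every even $d \geq 6$, so $a(2m)$ is at least the number of even divisors of $2m$ that are at least $6$. A short calculation (write $m = 2^k m'$ with $m'$ odd and enumerate divisors of $2m = 2^{k+1} m'$) shows that the total number of even divisors of $2m$ is exactly $d(m)$, regardless of the parity of $m$. If $m$ is odd, then $4 \nmid 2m$, so one subtracts only the divisor $d = 2$ and concludes $a(2m) \geq d(m) - 1$; if $m$ is even, then both $d = 2$ and $d = 4$ divide $2m$ and must be subtracted, giving $a(2m) \geq d(m) - 2$.

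No single step here is deep: the entire argument is essentially bookkeeping layered on \eqref{CoeffIdentity}. The only point requiring care is the case split in the unconditional part, where the extra divisor $d = 8$ must be introduced precisely when $4 \mid m$ so that the correction term in \eqref{UnconditionalLower} is $-1$ exactly in the $m \equiv 2 \pmod 4$ case, and $0$ otherwise.
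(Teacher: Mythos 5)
Your proof is correct and follows essentially the same route as the paper's: both start from $a(2m)=\sum_{d\mid 2m}R(d)$ restricted to even divisors (your count of even divisors of $2m$ as $d(m)$ is exactly the paper's re-indexing $\sum_{d\mid m}R(2d)$), use $2p=p+p$ for each odd prime $p\mid m$ plus the extra divisor $8=3+5$ when $4\mid m$, and the same case splits for both bounds.
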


We note that the right hand side of \eqref{UnconditionalLower} is always positive for $m>2$.  So taking an integer $m>4$, we have that
$a(m) = 0$ if and only if $m$ is odd.  It is also worth observing that the right hand sides of \eqref{UnconditionalLower} and \eqref{UnderGoldbach}
are sometimes equal, namely when $m$ is prime.  In general, however, $d(m)$ is much larger than $\omega(m)$ so that our bound under the Goldbach conjecture is
stronger than the analogous unconditional bound.

It is reasonable to expect that, not only is $R(2d)$ positive for $d>2$, but it is quite large most of the time.  More specifically, Hardy and Littlewood have
proposed the following asymptotic formula.

\begin{conj}[Hardy and Littlewood~\cite{HL}] \label{HLConj}
As $n$ tends to infinity,
\begin{equation}
  R(2n) \sim 2C_2 \frac{n}{\log^2n} \prod_{\substack{p|n \\ p > 2}}\frac{p-1}{p-2},
\end{equation}
where $C_2$ is the twin primes constant
\begin{equation*}
	C_2 = \prod_{p>2}\left( 1-\frac{1}{(p-1)^2}\right).
\end{equation*}
\end{conj}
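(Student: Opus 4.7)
The plan would follow the Hardy--Littlewood circle method, which is the framework in which this conjecture was originally formulated. I should say at the outset that a rigorous proof of this asymptotic is an outstanding open problem in analytic number theory, arguably of comparable difficulty to the Goldbach conjecture itself, since a lower bound of the claimed order would settle Goldbach for all sufficiently large even $n$. What follows is the heuristic derivation of the stated main term, together with an honest account of where a proof breaks down.

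First I would write $R(2n)$ as a Fourier integral. Let $e(x) = e^{2\pi i x}$ and $S(\alpha) = \sum_{p \leq 2n-2} e(p\alpha)$, where the sum runs over odd primes. Then the orthogonality relation $\int_0^1 e(k\alpha)\,d\alpha = [k=0]$ gives
\begin{equation*}
R(2n) = \int_0^1 S(\alpha)^2 e(-2n\alpha)\,d\alpha.
\end{equation*}
Using a Farey dissection, I would split $[0,1)$ into major arcs $\mathfrak{M}$ centered at rationals $a/q$ with $q$ small relative to a parameter $Q \approx n^{1/2}$, and minor arcs $\mathfrak{m}$ everywhere else. On the major arcs, Siegel--Walfisz-type estimates approximate $S(\alpha)$ near $a/q$ by $(\mu(q)/\varphi(q))\int_0^{2n} e(\beta t)\,dt/\log t$, and squaring and integrating produces the predicted contribution $\mathfrak{S}(n) \int_2^{2n} dt/(\log t)^2$ with singular series
\begin{equation*}
\mathfrak{S}(n) = \sum_{q=1}^\infty \frac{\mu(q)^2}{\varphi(q)^2}\, c_q(-2n),
\end{equation*}
where $c_q$ is the Ramanujan sum. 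Evaluating this as an Euler product, the $p=2$ factor forces $n$ to be even, each odd $p \mid n$ contributes $(p-1)/(p-2)$, and each odd $p \nmid n$ contributes $1 - 1/(p-1)^2$. A short rearrangement converts these into $2C_2 \prod_{p \mid n,\, p>2} (p-1)/(p-2)$, and integration by parts on $\int_2^{2n} dt/(\log t)^2$ yields $\sim n/\log^2 n$.

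The main obstacle, and the step I genuinely cannot carry out, is bounding the minor arc contribution. One needs
\begin{equation*}
\int_{\mathfrak{m}} S(\alpha)^2 e(-2n\alpha)\,d\alpha = o\!\left(\frac{n}{\log^2 n}\right),
\end{equation*}
but Parseval already gives $\int_0^1 |S(\alpha)|^2\,d\alpha \sim \pi(2n) \sim 2n/\log(2n)$, which is of larger order than the conjectured main term. This is the structural reason the binary problem, unlike Vinogradov's ternary theorem where one has a spare factor of $S(\alpha)$ available for an $L^\infty$ estimate via Vaughan's identity, resists the circle method: there is no room to absorb the minor arcs. Consequently my proposal would, honestly, produce a rigorous derivation of the predicted main term on the major arcs and a complete evaluation of $\mathfrak{S}(n)$, while leaving the minor-arc estimate as the essential unresolved step.
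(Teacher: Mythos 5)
You have correctly identified the essential point: this statement is a conjecture, not a theorem. The paper offers no proof of it --- it simply cites Hardy and Littlewood's 1923 paper and later \emph{assumes} the asymptotic as the hypothesis of Proposition \ref{using epsilons prop} in order to deduce Theorem \ref{HLConjResult}. So there is no ``paper proof'' to compare against, and your honest account is the right response. Your heuristic is also the standard and correct one: the Fourier representation of $R(2n)$, the major-arc evaluation producing the singular series $\sum_q \mu(q)^2\varphi(q)^{-2}c_q(-2n)$, its Euler-product factorization into $2C_2\prod_{p\mid n,\,p>2}(p-1)/(p-2)$, and, crucially, the identification of the minor arcs as the fatal obstruction --- the Parseval bound $\int_0^1|S(\alpha)|^2\,d\alpha \sim 2n/\log(2n)$ exceeding the conjectured main term is exactly the structural reason the binary problem resists the method that proves Vinogradov's ternary theorem. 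One small bookkeeping remark: $\int_2^{2n}dt/\log^2t \sim 2n/\log^2 n$ rather than $n/\log^2 n$, so tracking the constant through your derivation requires care (the paper's own normalization of the conjecture, with $n/\log^2 n$ rather than $2n/\log^2(2n)$, differs by a factor of $2$ from the most common statement of Hardy--Littlewood's Conjecture A for ordered representations); since this is a heuristic for an open problem, the discrepancy is cosmetic, but it is worth reconciling the conventions before quoting the constant.
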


Under the assumption of Conjecture \ref{HLConj}, we can improve the bounds of Theorem \ref{aLowerBounds}.  If $2^k\parallel m$, then define
\begin{equation} \label{JDef}
	J(m) = \bigg( 2-\frac1{2^k} \bigg) \prod_{\substack{p^\ell\parallel m \\ p>2}} \bigg( 1-\frac2{p^{\ell+1}} \bigg) \bigg( 1-\frac2p \bigg)^{-1}.
\end{equation}
Here, $p^\ell \parallel m$ means that $p^\ell\mid m$ but $p^{\ell+1}\nmid m$.

\begin{thm} \label{HLConjResult}
If Conjecture \ref{HLConj} is true, then 
\begin{equation*}
	a(2m) \sim \frac{2C_2 J(m) m}{\log^2m}
\end{equation*}
as $m$ tends to infinity.
\end{thm}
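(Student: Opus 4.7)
The plan unfolds in three stages: a reduction to a divisor sum, an Euler product computation, and the control of two approximation errors.

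\emph{Reduction.} First I would invoke the stable coefficient identity \eqref{CoeffIdentity}, which gives $a(2m) = \sum_{d \mid 2m} R(d)$. Since the sum of two odd primes is always even, $R(d) = 0$ for every odd $d$, so only even divisors $d = 2e$ (with $e \mid m$) survive:
$$a(2m) = \sum_{e \mid m} R(2e).$$

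\emph{Main term via an Euler product.} Set $h(e) := \prod_{p \mid e,\, p > 2}(p-1)/(p-2)$ and $f(e) := 2C_2 e h(e)/\log^2 e$, so that Conjecture \ref{HLConj} reads $R(2e) \sim f(e)$. Substituting formally and replacing the factor $1/\log^2 e$ by the constant $1/\log^2 m$ (an approximation handled below), the heart of the problem becomes the multiplicative divisor sum $\sum_{e \mid m} e h(e)$. Writing $2^k \| m$, this factors as an Euler product in which the local factor at $p = 2$ is $\sum_{b=0}^{k} 2^b = 2^k(2 - 1/2^k)$, and the local factor at an odd prime $p$ with $p^\ell \| m$ is
$$\sum_{b=0}^{\ell} p^b h(p^b) = \frac{p^{\ell+1} - 2}{p - 2} = p^\ell \cdot \frac{1 - 2/p^{\ell+1}}{1 - 2/p}.$$
Multiplying these local factors together and pulling out $m = 2^k \prod_{p > 2} p^\ell$ exactly reproduces the quantity $J(m)$ from \eqref{JDef}, yielding $\sum_{e \mid m} e h(e) = m J(m)$.

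\emph{Error control.} Two approximations must be justified. For the Hardy--Littlewood error, given $\epsilon > 0$, Conjecture \ref{HLConj} furnishes $E_0$ such that $|R(2e) - f(e)| \leq \epsilon f(e)$ for all $e > E_0$; the divisors $e \leq E_0$ contribute only $O(E_0^2)$, which is dwarfed by the main term since already $R(2m) \gg m/\log^2 m$. For the logarithm error, I would split $\sum_{e \mid m} f(e)$ at $e = m^{1-\delta}$. In the large range $\log^2 e = (1 + O(\delta)) \log^2 m$, yielding $(1 + O(\delta)) \cdot 2C_2 m J(m)/\log^2 m$; in the small range the standard bounds $d(m) = m^{o(1)}$ and $h(e) \ll (\log m)^{O(1)}$ (a consequence of Mertens' theorem applied to the prime divisors of $m$) bound the contribution by $m^{1 - \delta + o(1)} = o(m/\log^2 m)$. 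Letting $\epsilon \to 0$ first and $\delta \to 0$ second assembles everything into the claimed asymptotic.

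\emph{Principal obstacle.} No individual step is deep; the only piece requiring care is the logarithm-error estimate, where one must verify that the weighted divisor sum $\sum_{e \mid m} e h(e)$ is genuinely concentrated on divisors near $m$, so that replacing $\log^2 e$ by $\log^2 m$ is asymptotically harmless. This is the technical heart of the argument, though it calls on nothing beyond the bounds $d(m) = m^{o(1)}$ and a Mertens-type estimate for $h$.
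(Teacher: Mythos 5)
Your proposal is correct and follows essentially the same route as the paper: the reduction $a(2m)=\sum_{d\mid m}R(2d)$, the Euler-product identity $\sum_{d\mid m}d\,\prod_{p\mid d,\,p>2}\frac{p-1}{p-2}=mJ(m)$, and the split of the divisor sum at $d=m^{1-\delta}$ with $\tau(m)=m^{o(1)}$ controlling the small divisors are exactly the steps in the paper's Proposition~\ref{using epsilons prop}. The only difference is cosmetic bookkeeping of the $\ep$'s.
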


For a positive integer $M$, it is also of interest to study the summatory function
\begin{equation*}
      A(M) = \sum_{m=1}^{2M} a(m).
\end{equation*}
By applying Theorem \ref{aLowerBounds} directly, we are able to verify that
\begin{equation*}
	A(M) \geq \sum_{m=1}^M \omega(m)  + O(M) = M\log\log M + O(M),
\end{equation*}
where the last equality is obtained from \cite{HardyWright}, page 355.  If we are willing to assume the Goldbach conjecture,
a similar argument reveals that
\begin{equation} \label{GLower}
	A(M) \geq \sum_{m=1}^M d(m) + O(M) = M\log M + O(M).
\end{equation}
As we have remarked following our statement of Theorem \ref{aLowerBounds}, we anticipate that $a(2m)$ is large much of the time.
However, in order to obtain an asymptotic formula for $a(2m)$, we needed to assume a very strong conjecture of Hardy and Littlewood.
In the case of $A(M)$, we can obtain such a formula unconditionally.

\begin{thm} \label{UnconditionalABound}
  We have that
  \begin{equation*}
    A(M) = \frac{\pi^2M^2}{3\log^2M} + O\left( \frac{M^2\log\log M}{\log^3M} \right).
  \end{equation*}
\end{thm}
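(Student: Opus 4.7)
Proof proposal. The plan is to use the stable-coefficient identity $a(m) = \sum_{d \mid m} R(d)$ from equation~\eqref{CoeffIdentity} to recast $A(M)$ as a counting sum over pairs of odd primes, and then appeal to the prime number theorem.

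Step 1 (combinatorial rearrangement). Interchanging the order of summation,
\[ A(M) = \sum_{m=1}^{2M}\sum_{d\mid m}R(d) = \sum_{d=1}^{2M}R(d)\left\lfloor\frac{2M}{d}\right\rfloor = \sum_{k=1}^{\lfloor M/3\rfloor} S\!\left(\frac{2M}{k}\right), \]
where $S(X) := \sum_{d \leq X}R(d) = \#\{(p,q)\in\PP^2 : p+q\leq X\}$. The last equality writes $\lfloor 2M/d\rfloor = \#\{k\ge 1 : kd\le 2M\}$ and swaps sums; the truncation uses $R(d) = 0$ for $d<6$.

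Step 2 (asymptotic for $S$). The central lemma is
\[ S(X) = \frac{X^2}{2\log^2 X} + O\!\left(\frac{X^2\log\log X}{\log^3 X}\right). \]
I would derive this by writing $S(X) = \sum_{p\text{ odd prime},\,p\le X-3}(\pi(X-p)-1)$, applying the PNT estimate $\pi(y) = y/\log y + O(y/\log^2 y)$, and using Abel summation to convert the sum into the integral $\int_3^{X-3}(X-t)/(\log t \log(X-t))\,dt$. The symmetry $t\leftrightarrow X-t$ turns this into $(X/2)\int_3^{X-3}dt/(\log t\log(X-t))$, which I would evaluate by splitting at $t=X/2$: on $[3, X/2]$ we have $\log(X-t) = \log X + O(1)$ and $\int_3^{X/2} dt/\log t = X/(2\log X) + O(X/\log^2 X)$. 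The $\log\log X$ factor in the error accommodates the small-argument regions $t=O(X/\log X)$, $X-t = O(X/\log X)$ together with the accumulated PNT errors.

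Step 3 (summation over $k$). Substituting into $A(M) = \sum_k S(2M/k)$: the tail $k > M^{1/2}$ contributes $O(M^{3/2})$ via the trivial bound $S(X) \ll X^2/\log^2 X$. For $k \le M^{1/2}$ one has $\log(2M/k) \ge \tfrac12\log(2M)$, so the $S$-error yields $O(M^2 \log\log M/\log^3 M)$. The main sum becomes
\[ \sum_{k\le M^{1/2}} \frac{(2M/k)^2}{2\log^2(2M/k)} = \frac{2M^2}{\log^2(2M)}\sum_{k\le M^{1/2}}\frac{1}{k^2}\left(1 - \frac{\log k}{\log(2M)}\right)^{\!-2}. \]
Truncating the inner sum at $k=\log M$ and Taylor-expanding $(1-x)^{-2} = 1 + 2x + O(x^2)$ on the low range (where $x = O(\log\log M/\log M)$), while using $(1-x)^{-2} \le 4$ together with $\sum_{k > \log M} 1/k^2 = O(1/\log M)$ on the high range, gives $\pi^2/6 + O(1/\log M)$ for the inner sum. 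Hence the main sum equals $\pi^2 M^2/(3\log^2 M) + O(M^2/\log^3 M)$, which combines with the $S$-error to produce the stated asymptotic.

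The principal obstacle is Step 2: obtaining the sharp $O(X^2 \log\log X/\log^3 X)$ error for $S(X)$. Tracking the PNT error through the Abel summation without losing extra logarithmic factors requires care, and primes $p$ very close to $X$ (where $\log(X-p)$ is small and the PNT approximation of $\pi(X-p)$ breaks down) must be handled separately by elementary Chebyshev-type bounds. Once $S(X)$ is in hand, the summation in Step 3 is essentially mechanical.
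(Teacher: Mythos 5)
Your proposal is correct and follows essentially the same route as the paper: the same rearrangement $A(M)=\sum_{k}S(2M/k)$ (your $S$ is the paper's $Q$), the same key lemma $S(X)=X^2/(2\log^2X)+O(X^2\log\log X/\log^3X)$ established via the prime number theorem and partial summation, and the same final summation over $k$. The only differences are cosmetic --- the truncation point ($M^{1/2}$ versus the paper's $\log^3M$, and the symmetrization $t\leftrightarrow X-t$ in the lemma in place of the paper's direct evaluation of $\int \pi(t)/\log(x-t)\,dt$).
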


\section{Proofs of the results from section \ref{PolyIntro}}

We begin this section with the proof to Theorem \ref{FDivisibility}

\begin{proof}[Proof of Theorem \ref{FDivisibility}]
Let $\zeta$ be a primitive $N$th root of unity.  We have immediately that
\begin{align*}
  F_N(\zeta) & = \sum_{k=0}^{N-1}\left(\sum_{n=1}^{N-1}\chi_\PP(n)\zeta^{kn}\right)^2 \\
  & = \sum_{k=0}^{N-1}\sum_{m=1}^{N-1}\sum_{n=1}^{N-1}\chi_\PP(m)\chi_\PP(n)\zeta^{k(m+n)} \\
  & =  \sum_{m=1}^{N-1}\sum_{n=1}^{N-1}\chi_\PP(m)\chi_\PP(n)\sum_{k=0}^{N-1}\zeta^{k(m+n)}.
      \end{align*}
      We know that
      \begin{equation*}
          \sum_{k=0}^{N-1}\zeta^{k(m+n)} = 0
      \end{equation*}
unless $m+n\equiv 0\mod N$.  In our case, this may occur only
when $m+n= N$, implying that
      \begin{equation*}
          F_N(\zeta) = \sum_{n=1}^{N-1} \chi_\PP(n)\chi_\PP(N-n)\sum_{k=0}^{N-1}\zeta^{kN} = NR(N).
\end{equation*}
If $R(N) = 0$ then $F_N(\zeta) = 0$ showing that $\Phi_N$ must divide $F_N$.  On the other hand, if $\Phi_N$ divides $F_N$, it is
obvious that $F_N(\zeta) = 0$ so that $R(N) = 0$.
\end{proof}

We already have all of the tools necessary to prove Theorem \ref{2NCyclo}.

\begin{proof}[Proof of Theorem \ref{2NCyclo}]
We must show that $F_N(e^{\pi i/N}) = 0$.  To see this, note that
\begin{align*}
	F_N(e^{\pi i/N}) & = \sum_{k=0}^{N-1} \left(\sum_{n=1}^{N-1}\chi_\PP(n)e^{\frac{\pi ikn}{N}}\right)^2 \\
              & = \sum_{k=0}^{N-1} \sum_{m=1}^{N-1}\sum_{n=1}^{N-1}\chi_\PP(m)\chi_\PP(n)e^{\frac{\pi ik(m+n)}{N}} \\
		& =  \sum_{m=1}^{N-1}\sum_{n=1}^{N-1}\chi_\PP(m)\chi_\PP(n)\sum_{k=0}^{N-1}e^{\frac{\pi ik(m+n)}{N}}.
\end{align*}
The product $\chi_\PP(m)\chi_\PP(n) = 0$ unless $m$ and $n$ are both odd primes.  In this case, we certainly have that $m+n$ is even so that
\begin{equation} \label{EvenTransform}
	\sum_{k=0}^{N-1}e^{\frac{\pi ik(m+n)}{N}} = \sum_{k=0}^{N-1}e^{\frac{2\pi ik((m+n)/2)}{N}}.
\end{equation}
Of course, $0 < (m+n)/2 < N$ implying that the right hand side of \eqref{EvenTransform} equals zero.  In other words, we have shown that
\begin{equation*}
	\chi_\PP(m)\chi_\PP(n)\sum_{k=0}^{N-1}e^{\frac{\pi ik(m+n)}{N}} = 0
\end{equation*}
for all $1\leq m,n < N$, verifying the theorem.
\end{proof}

\section{Proofs of the results from section \ref{Properties}}

\begin{proof}[Proof of Theorem \ref{Symmetry}]
  It follows directly from the definition that
  \begin{equation} \label{MinusDefinition}
    F_N(-z) = \sum_{k=0}^{N-1}\left(\sum_{n=1}^{N-1}(-1)^{kn}\chi_\PP(n)z^{kn}\right)^2.
  \end{equation}
  If $n$ is even, we certainly have that $\chi_\PP(n) = 0$.  Otherwise, we have that $(-1)^n = -1$, which implies that
  $(-1)^{kn}\chi_\PP(n) = (-1)^{k}\chi_\PP(n)$ for all $n$.  Using \eqref{MinusDefinition}, we find that
  \begin{equation*}
    F_N(-z) = \sum_{k=0}^{N-1}\left((-1)^k\sum_{n=1}^{N-1}\chi_\PP(n)z^{kn}\right)^2 = \sum_{k=0}^{N-1}\left(\sum_{n=1}^{N-1}\chi_\PP(n)z^{kn}\right)^2 = F_N(z)
  \end{equation*}
  which completes the proof.
\end{proof}
   
In view of Theorem \ref{Symmetry}, we obtain our proof of Corollary \ref{GoldbachEquivalencyGeneral} almost immediately.

\begin{proof}[Proof of Corollary \ref{GoldbachEquivalencyGeneral}]
  In view of Theorem \ref{FDivisibility}, we immediately have that \eqref{NDiv} if and only if \eqref{GoldbachDiv}.  To finish the proof, we will show that
  \eqref{NDiv} if and only if \eqref{MDiv}.  To see this, note that since $M$ is odd, we have that $\Phi_N(z) = \Phi_M(-z)$.  Furthermore, Theorem \ref{Symmetry}
  implies that $\Phi_N(z)$ divides $F_N(z)$ if and only if $\Phi_N(-z)$ divides $F_N$ and the result follows.
\end{proof}

\begin{proof}[Proof of Theorem \ref{PropertyList}]
  Suppose that $a=1$ if $M$ is odd and $a=0$ if $M$ is even.  We must show that
  \begin{equation*} \label{ComplicatedVersion}
    F_N(\zeta_M) \ge N \sum_{1\le k \le N/(2^aM)} R(2^akM).
  \end{equation*}
  From the definition of $F_N$, we have that
  \begin{align*}
       F_N(\zeta_M) & = \sum_{k=0}^{N-1} \sum_{2 < p_1,p_2 \le N-1}\zeta_M^{k(p_1+p_2)} \\
       & = \sum_{2 < p_1,p_2 \le N-1} \sum_{i=0}^{N/M-1}\sum_{k=0}^{M-1}\zeta_M^{(iM+k)(p_1+p_2)} \\
       & = \frac{N}{M}\sum_{2 < p_1,p_2 \le N-1}\sum_{k=0}^{M-1}\zeta_M^{k(p_1+p_2)}.
\end{align*}
Now the inner summation over $k$ is zero unless $(p_1+p_2)/M \in\intg$. Hence we have
\begin{align*}
       F_N(\zeta_M) & = N\sum_{1\le \ell \le 2(N-1)/M} \sum_{\substack{2 < p_1,p_2 \le N-1 \\ p_1+p_2=\ell M}}1 \\
       & = N\left\{\sum_{1\le \ell \le N/M}+\sum_{N/M+1\le \ell \le 2(N-1)/M}\right\}\sum_{\substack{2 < p_1,p_2 \le N-1 \\ p_1+p_2=\ell M}}1 \\
       & = N\sum_{1\le \ell \le N/(2^aM)} R(2^a\ell M) + N \sum_{N/M+1\le \ell  \le 2(N-1)/M}\sum_{\substack{2 < p_1,p_2 \le N-1 \\ p_1+p_2=\ell M}}1 \\
       & \ge N \sum_{1\le \ell  \le N/(2^aM)} R(2^a\ell M).
\end{align*}
and the result follows.
\end{proof}

\begin{proof}[Proof of Corollary \ref{SimpleZetaLower}]
  If $M$ is even, we have that
  \begin{equation*}
    F_N(\zeta_M) \geq N \sum_{n=1}^{N/M} R(nM) \geq N R\left(\frac{N}{M}\cdot M\right) = NR(N).
  \end{equation*}
  If $M$ is odd and $N$ is even, then $N/2M\in\nat$ so it follows that
  \begin{equation*}
    F_N(\zeta_M) \geq N\sum_{n=1}^{N/2M} R(2nM) \geq N R\left(2\cdot\frac{N}{2M}\cdot M\right) = NR(N).
  \end{equation*}
  Finally, if $M$ and $N$ are both odd, then $NR(N) = 0$ so that
  \begin{equation*}
    F_N(\zeta_M) \geq N\sum_{n=1}^{[N/2M]} R(2nM) \geq 0 = NR(N).
  \end{equation*}
\end{proof}

\begin{proof}[Proof of Corollary \ref{StrongerDirection}]
  If $\Phi_M\mid F_N$ then we have that $F_N(\zeta_M) = 0$.  It follows from Corollary \ref{SimpleZetaLower} that $R(N) = 0$.
\end{proof}

\section{Proofs of the results from section \ref{Coefficients}}

\begin{proof}[Proof of Theorem \ref{Fcoeffs}]
      We first note that
      \begin{equation*}
         F_N(z) = \sum_{k=0}^{N-1}\left(\sum_{n=1}^{N-1}\chi_\PP(n)z^{kn}\right)^2 = \sum_{k=0}^{N-1}\sum_{m=1}^{N-1}\sum_{n=1}^{N-1}\chi_\PP(m)\chi_\PP(n)z^{k(m+n)}.
       \end{equation*}
       Relabeling the indices, we find that
       \begin{align*}
	F_N(z)	& = \sum_{m = 0}^{2(N-1)^2}\left( \sum_{\substack{d\mid m \\ m/d < N}}\sum_{\substack{n_1+n_2 = d \\ 1\leq n_1,n_2< N}}\chi_\PP(n_1)\chi_\PP(n_2)\right)z^m \\
		& = \sum_{m = 0}^{2(N-1)^2}\left( \sum_{\substack{d\mid m \\ m/d < N}}\sum_{n=\max\{0,d-N\}+1}^{\min\{N,d\}-1}\chi_\PP(n)\chi_\PP(d-n)\right)z^m
      \end{align*}
      establishing the theorem.
\end{proof}

\begin{proof}[Proof of Theorem \ref{aLowerBounds}]
      Using Theorem \ref{CoeffIdentity}, we have immediately that
      \begin{equation*}
a(2m) = \sum_{d\mid 2m} \sum_{n=1}^{d-1}
\chi_\PP(n)\chi_\PP(d-n).
      \end{equation*}
      However, it is clear that
      \begin{equation*}
          \sum_{n=1}^{d-1} \chi_\PP(n)\chi_\PP(d-n) = 0
      \end{equation*}
      whenever $d$ is odd, which implies that
      \begin{eqnarray}
a(2m) & = & \sum_{\substack{d\mid 2m \\ d\ \mathrm{even}}}
\sum_{n=1}^{d-1} \chi_\PP(n)\chi_\PP(d-n) \nonumber \\
& = & \sum_{d\mid m} \sum_{n=1}^{2d-1}
\chi_\PP(n)\chi_\PP(2d-n). \label{EvenIdentity}
      \end{eqnarray}

We now use \eqref{EvenIdentity} to prove
\eqref{UnconditionalLower}.  If $p$ is an odd prime, we have that
$\chi_\PP(p)\chi_\PP(2p-p) = 1$ implying
      \begin{equation} \label{OddPrimeLower}
          \sum_{n=1}^{2p-1} \chi_\PP(n)\chi_\PP(2p-n) \geq 1.
      \end{equation}
Now let $\omega_{\mathrm{odd}}(m)$ denote the number of distinct
odd prime divisors of $m$ and consider three cases according to the
residue class
      of $m$ modulo $4$.
      \begin{enumerate}[(i)]

\item First assume that $m$ is odd.  In this case, we have that
$\omega_{\mathrm{odd}}(m) = \omega(m)$ and $m\not\equiv 2\
\mathrm{mod}\ 4$.
The inequality \eqref{OddPrimeLower} holds for every odd prime
divisor or $m$.  Combining this observation with
\eqref{EvenIdentity}, we find that
      \begin{equation*}
          a(2m) \geq \omega_{\mathrm{odd}}(m) = \omega(m)
      \end{equation*}
      completing the proof in this case.

\item Now assume that $m \equiv 0\ \mathrm{mod}\ 4$.  It is
easily verified that
      \begin{equation*}
          \sum_{d\mid 4} \sum_{n=1}^{7} \chi_\PP(n)\chi_\PP(8-n) = 1,
      \end{equation*}
and then it follows from \eqref{EvenIdentity} and
\eqref{OddPrimeLower} that
      \begin{equation*}
          a(2m) \geq \omega_{\mathrm {odd}}(m) + 1.
      \end{equation*}
Since $2$ divides $m$, we have that  $\omega_{\mathrm {odd}}(m) = \omega(m) -1$ establishing the result in this case.

\item Finally, we consider the case that $m\equiv 2\
\mathrm{mod}\ 4$.  Again, $m$ is even so that $\omega_{\mathrm{odd}}(m) = \omega(m) -1$,
and we conclude from \eqref{EvenIdentity} and
\eqref{OddPrimeLower} that $a(2m) \geq \omega_{\mathrm {odd}}(m)$.
This completes the proof of \eqref{UnconditionalLower}.
\end{enumerate}

To establish \eqref{UnderGoldbach}, we assume that the Goldbach
Conjecture holds.  Hence, we have that
      \begin{equation} \label{GoldbachLowerBound}
          \sum_{n=1}^{2d-1} \chi_\PP(n)\chi_\PP(2d-n) \geq 1
      \end{equation}
for all divisors $d$ of $m$ with $d\not\in\{1,2\}$.  Here we consider
two cases.
      \begin{enumerate}[(i)]

\item Suppose first that $m$ is odd.  Here, we have that
\eqref{GoldbachLowerBound} holds for all divisors $d$ of $m$
different than $1$.  This gives
\begin{align*}
	a(2m) = \sum_{d\mid m} \sum_{n=1}^{2d-1}\chi_\PP(n)\chi_\PP(2d-n) & =  \sum_{\substack{d\mid m \\ d\ne 1}}\sum_{n=1}^{2d-1} \chi_\PP(n)\chi_\PP(2d-n) \\
	 & \geq  \sum_{\substack{d\mid m \\ d\ne 1}} 1  = d(m) - 1
      \end{align*}
      completing the proof in this case.

\item In the case that $m$ is even, we have that
\eqref{GoldbachLowerBound} holds except when $d=1$ or $d=2$.
Therefore, we have that
 \begin{align*}
	a(2m) = \sum_{d\mid m} \sum_{n=1}^{2d-1}\chi_\PP(n)\chi_\PP(2d-n) & = \sum_{\substack{d\mid m \\ d\not\in\{1,2\}}}\sum_{n=1}^{2d-1} \chi_\PP(n)\chi_\PP(2d-n) \\
	& \geq \sum_{\substack{d\mid m \\ d\not\in\{1,2\}}} 1 = d(m) - 2
      \end{align*}
      which completes the proof in this case as well.
      \end{enumerate} 
\end{proof}

We now move on to a proposition from which we will deduce Theorem~\ref{HLConjResult}. Define
\[
f(n) = \prod_{\substack{p|n \\ p > 2}}\frac{p-1}{p-2}
\]
to be the multiplicative function appearing in Conjecture~\ref{HLConj}, and note that if $k\ge0$ is the integer such that $2^k\parallel m$, then
\begin{align}
\sum_{d\mid m} df(d) &= \prod_{p^\ell\parallel m} \sum_{d\mid p^\ell} df(d) \notag \\
&= \prod_{p^\ell\parallel m} \big( 1 + pf(p) + p^2f(p^2) + \cdots + p^\ell f(p^\ell) \big) \notag \\
&= \bigg( 1 + 2 \frac{2^k-1}{2-1} \bigg) \prod_{\substack{p^\ell\parallel m \\ p>2}} \bigg( 1 + \frac{p-1}{p-2} \cdot p \frac{p^\ell-1}{p-1} \bigg) \notag \\
&= (2^{k+1}-1) \prod_{\substack{p^\ell\parallel m \\ p>2}} \frac{p^{\ell+1}-2}{p-2} = mJ(m)
\label{df(d) sum}
\end{align}
by comparison with \eqref{JDef}.

\begin{prop}
\label{using epsilons prop}
Let $0<\ep\le\frac12$ be given. Suppose there exists a positive integer $n(\ep)$ such that
\begin{equation}
\label{HL with epsilons}
(1-\ep)2C_2 f(n) \frac n{\log^2n} \le R(2n) \le (1+\ep)2C_2 f(n) \frac n{\log^2n}
\end{equation}
for all $n>n(\ep)$. Then there exists a constant $m(\ep)$ such that
\begin{equation}
\label{our theorem with epsilons}
(1-2\ep)2C_2 J(m) \frac m{\log^2m} \le a(2m) \le (1+11\ep)2C_2 J(m) \frac m{\log^2m}
\end{equation}
for all $m>m(\ep)$.
\end{prop}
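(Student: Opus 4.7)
The plan is to combine the divisor identity
\[
a(2m) = \sum_{d \mid m} R(2d),
\]
which is immediate from the proof of Theorem \ref{aLowerBounds} (see \eqref{EvenIdentity}), with the multiplicative identity $\sum_{d \mid m} d f(d) = m J(m)$ from \eqref{df(d) sum}. I would first split
\[
a(2m) = \sum_{\substack{d \mid m \\ d \le n(\ep)}} R(2d) + \sum_{\substack{d \mid m \\ d > n(\ep)}} R(2d),
\]
bound the small-$d$ sum trivially by $O_\ep(1)$ (at most $n(\ep)$ terms, each of size $O_\ep(1)$), and apply the hypothesis \eqref{HL with epsilons} to the large-$d$ sum, reducing the problem to estimating
\[
S(m) = \sum_{\substack{d \mid m \\ d > n(\ep)}} \frac{d f(d)}{\log^2 d}.
\]

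For the lower bound, the inequality $\log d \le \log m$ gives immediately
\[
S(m) \ge \frac{1}{\log^2 m}\Bigl(m J(m) - \sum_{\substack{d \mid m \\ d \le n(\ep)}} d f(d)\Bigr) = \frac{m J(m)}{\log^2 m} - O_\ep(1).
\]
Since each factor in \eqref{JDef} is $\ge 1$ and $2-2^{-k} \ge 1$, we have $J(m) \ge 1$, so for $m$ large the error absorbs into an extra factor $1-\ep$, and combined with the $(1-\ep)$ from the hypothesis this produces the claimed $(1-2\ep)$.

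The upper bound is the harder direction, because $1/\log^2 d$ is \emph{larger} than $1/\log^2 m$ for the small divisors of $m$, so one cannot directly replace $\log d$ by $\log m$. My fix is to introduce a second cutoff at $d = m^{1-\delta}$ with $\delta=\ep$. For $d > m^{1-\delta}$ one has $\log d \ge (1-\delta)\log m$, so
\[
\sum_{\substack{d \mid m \\ d > m^{1-\delta}}} \frac{d f(d)}{\log^2 d} \le \frac{1}{(1-\delta)^2 \log^2 m} \sum_{d \mid m} d f(d) = \frac{m J(m)}{(1-\delta)^2 \log^2 m}.
\]
For $n(\ep) < d \le m^{1-\delta}$ one uses $f(d) \le f(m)$ together with $f(m), d(m) = O(m^{o(1)})$ (the former coming from $\log f(m) \ll \sum_{p\mid m} 1/(p-2) \ll \log\log m$ by Mertens) to see that the intermediate contribution is $O_\ep(m^{1-\delta+o(1)}) = o(m J(m)/\log^2 m)$. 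A direct check shows $(1+\ep)/(1-\ep)^2 + \ep \le 1 + 11\ep$ on the interval $0 < \ep \le \tfrac12$, which after inflating by the slack absorbed from the lower-order error terms gives the stated $(1+11\ep)$.

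The main obstacle is this second cutoff argument: the function $d \mapsto d f(d)/\log^2 d$ is not multiplicative, so one cannot invoke an Euler product and must genuinely exploit that $df(d)$ is concentrated on the divisors of $m$ that are close to $m$. Once this concentration is quantified via the elementary bounds $f(m) = m^{o(1)}$ and $d(m) = m^{o(1)}$, the rest is careful tracking of error terms to achieve the specific numerical constants $2$ and $11$; no deeper input is required.
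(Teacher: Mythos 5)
Your proposal is correct and follows essentially the same route as the paper: both rest on the identity $a(2m)=\sum_{d\mid m}R(2d)$, the evaluation $\sum_{d\mid m}df(d)=mJ(m)$ from \eqref{df(d) sum}, and a cutoff near $m^{1-\ep}$ so that $\log d$ may be replaced by $\log m$ at the cost of a factor $(1-\ep)^{-2}$, together with the same numerical inequality $(1+\ep)/(1-\ep)^2\le 1+10\ep$ for $0<\ep\le\frac12$. The one genuine difference is in the lower bound: the paper also splits at $m^{1-\ep}$, discards the small divisors, and must then control $\sum_{d\mid m,\ d\le m^{1-\ep}}df(d)$ by a Rankin-type device (inserting the factor $(m^{1-\ep}/d)^{1+\ep/2}\ge1$), whereas you split at the fixed threshold $n(\ep)$, apply the hypothesis to every divisor above it, and only need to delete an $O_\ep(1)$ quantity --- which is cleaner and avoids that estimate entirely. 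Conversely, your upper bound is slightly more elaborate (three ranges, with the middle range $n(\ep)<d\le m^{1-\ep}$ handled via the hypothesis plus $f(d)\le f(m)=m^{o(1)}$), where the paper simply applies the trivial bound $R(2d)\le d$ on all of $d\le m^{1-\ep}$ and then $\tau(m)\ll_\ep m^{\ep/3}$; both work. The individual steps you sketch are sound, including the monotonicity $f(d)\le f(m)$ for $d\mid m$, the absorption of the $o(mJ(m)/\log^2m)$ errors using $J(m)\ge1$ and $2C_2>1$, and the endpoint check that $(1+\ep)/(1-\ep)^2+\ep\le1+11\ep$ on $0<\ep\le\frac12$ (with equality at $\ep=\frac12$).
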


\noindent It is clear that Theorem~\ref{HLConjResult} follows from Proposition~\ref{using epsilons prop}, since Conjecture~\ref{HLConj} implies that the hypothesis of Proposition~\ref{using epsilons prop} holds for every $\ep>0$.

\begin{proof}[Proof of Proposition~\ref{using epsilons prop}]
We shall not keep track explicitly of the necessary value for $m(\ep)$, instead simply saying ``when $m$ is large enough'' (in terms of $\ep$) in the appropriate places. We begin by writing
\begin{equation}
\label{split at 1-epsilon}
a(2m) = \sum_{c\mid 2m} R(c) = \sum_{d\mid m} R(2d) = \sum_{\substack{d\mid m \\ d\le m^{1-\ep}}} R(2d) + \sum_ {\substack{d\mid m \\ d>m^{1-\ep}}} R(2d)
\end{equation}
(where the second equality uses the fact that $R(c)=0$ when $c$ is odd).

First we establish the upper bound in equation~\eqref{our theorem with epsilons}. We have $m^{1-\ep} > n(\ep)$ when $m$ is large enough, and so the summands in the second sum on the right-hand side of equation~\eqref{split at 1-epsilon} can be bounded above by the upper bound in equation~\eqref{HL with epsilons}. For the first sum on the right-hand side we simply use the trivial bound $R(2n) \le n$. The result is
\begin{align*}
a(2m) &\le \sum_{\substack{d\mid m \\ d\le m^{1-\ep}}} d + \sum_ {\substack{d\mid m \\ d>m^{1-\ep}}} (1+\ep)2C_2 f(d) \frac{d}{\log^2d} \\
&\le \sum_{\substack{d\mid m \\ d\le m^{1-\ep}}} m^{1-\ep} + (1+\ep)2C_2 \frac1{(1-\ep)^2\log^2m} \sum_ {\substack{d\mid m \\ d>m^{1-\ep}}} df(d) \\
&= m^{1-\ep}\tau(m) + \frac{1+\ep}{(1-\ep)^2} \frac{2C_2}{\log^2m} mJ(m)
\end{align*}
using the identity~\eqref{df(d) sum}, where $\tau(n)$ denotes the number of divisors of~$n$. It is well known that $\tau(m) \ll_\ep m^{\ep/3}$, and so the first term is less than $\ep m/\log^2m$ when $m$ is large enough. Also $(1+\ep)/(1-\ep)^2 \le 1+10\ep$ for $0<\ep\le\frac12$. Therefore
\begin{align*}
a(2m) &\le \ep \frac m{\log^2m} + (1+10\ep) \frac{2C_2}{\log^2m} mJ(m) \le (1+11\ep) 2C_2 J(m) \frac m{\log^2m}
\end{align*}
when $m$ is large enough, since $J(m)\ge1$ for all positive integers $m$ and $2C_2>1$. This establishes the upper bound in equation~\eqref{our theorem with epsilons}.

A similar method addresses the lower bound in equation~\eqref{our theorem with epsilons}. Since $m^{1-\ep} > n(\ep)$ when $m$ is large enough, the summands in the second sum on the right-hand side of equation~\eqref{split at 1-epsilon} can be bounded below by the lower bound in equation~\eqref{HL with epsilons}; the first sum on the right-hand side is nonnegative, and so we can simply delete it. We obtain the lower bound
\begin{align}
a(2m) &\ge \sum_ {\substack{d\mid m \\ d>m^{1-\ep}}} (1+\ep)2C_2 f(d) \frac{d}{\log^2d} \notag \\
&\ge (1-\ep)\frac{2C_2}{\log^2m} \sum_ {\substack{d\mid m \\ d>m^{1-\ep}}} df(d) = (1-\ep)\frac{2C_2}{\log^2m} \bigg( mJ(m) - \sum_ {\substack{d\mid m \\ d\le m^{1-\ep}}} df(d) \bigg),
\label{taking care of difference}
\end{align}
again using the identity~\eqref{df(d) sum}. This last sum is bounded above by
\begin{equation*}
\sum_ {\substack{d\mid m \\ d\le m^{1-\ep}}} df(d) \le \sum_{d\mid m} \bigg( \frac{m^{1-\ep}}d \bigg)^{1+\ep/2} df(d) \le m^{1-\ep/2} \sum_{d\mid m} \prod_{\substack{p|d \\ p > 2}}\frac{p-1}{p^{\ep/2}(p-2)}.
\end{equation*}
There are only finitely many primes $p$ for which $(p-1)/p^{\ep/2}(p-2)$ exceeds 1, and so the inner product on the right-hand side is uniformly bounded by some constant $C(\ep)$. Therefore
\[
\sum_ {\substack{d\mid m \\ d\le m^{1-\ep}}} df(d) \le C(\ep) m^{1-\ep/2} \sum_{d\mid m} 1 = C(\ep) m^{1-\ep/2} \tau(m),
\]
which as above is less than $\ep m$ for $m$ large enough. Therefore equation~\eqref{taking care of difference} becomes
\[
a(m) \ge (1-\ep)\frac{2C_2}{\log^2m}( mJ(m) - \ep m) \ge (1-2\ep) 2C_2J(m) \frac m{\log^2m}
\]
when $m$ is large enough, again since $J(m)\ge1$ always. This establishes the lower bound in equation~\eqref{our theorem with epsilons}.
\end{proof}

Before we begin the proof of Theorem \ref{UnconditionalABound}, we will require a lemma regarding the function
\begin{equation*}
Q(x) = \sum_{p+q\le x} 1,
\end{equation*}
where $p$ and $q$ denote primes.

\begin{lem} \label{Q lemma}
  Uniformly for $x\ge3$,
  \begin{equation*}
    Q(x) = \frac{x^2}{2\log^2 x} + O\left( \frac{x^2\log\log x}{\log^3 x} \right).
  \end{equation*}
\end{lem}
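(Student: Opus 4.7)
The plan is to exploit the identity $Q(x) = \sum_{p \le x} \pi(x - p)$, which follows by fixing $p$ and counting primes $q \le x - p$. The main term and error will then emerge by applying the prime number theorem in the form $\pi(y) = y/\log y + O(y/\log^2 y)$ to each summand, while being careful about the primes $p$ close to $x$, where $x - p$ is small and this asymptotic is no longer sharp.

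The natural splitting point is $p = x - x/\log x$. For the tail range $p \in (x - x/\log x,\, x]$, PNT with error term gives $\pi(x) - \pi(x - x/\log x) \ll x/\log^2 x$ primes, each contributing $\pi(x-p) \le x - p < x/\log x$, so the total tail contribution is $O(x^2/\log^3 x)$. For the main range $p \le x - x/\log x$, one has $\log(x-p) = \log x + O(\log\log x)$ and hence $1/\log(x-p) = 1/\log x + O(\log\log x/\log^2 x)$, which combined with PNT yields
\[
\pi(x-p) = \frac{x-p}{\log x} + O\!\left(\frac{(x-p)\log\log x}{\log^2 x}\right).
\]

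Summing the leading term reduces the problem to computing $\sum_{p \le y}(x - p) = x\pi(y) - \sum_{p \le y} p$ at $y = x - x/\log x$. Partial summation from PNT yields $\sum_{p \le y} p = y^2/(2\log y) + O(y^2/\log^2 y)$; substituting $y = x - x/\log x$ gives
\[
\sum_{p \le y}(x-p) = \frac{x^2}{2\log x} + O\!\left(\frac{x^2}{\log^2 x}\right),
\]
and dividing by $\log x$ produces the desired main term $x^2/(2\log^2 x)$ with error $O(x^2/\log^3 x)$. Meanwhile, the $O$--term in the displayed PNT approximation contributes at most
\[
O\!\left(\frac{\log\log x}{\log^2 x}\sum_{p \le x}(x-p)\right) = O\!\left(\frac{x^2\log\log x}{\log^3 x}\right),
\]
which is the dominant error and yields the claimed bound.

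The principal obstacle is careful bookkeeping rather than any deep difficulty: one must verify the short-interval estimate $\pi(x) - \pi(x - x/\log x) \ll x/\log^2 x$ (which follows from PNT itself), apply partial summation to $\sum_{p \le y} p$ with enough care that the relative error remains $O(1/\log y)$ after substitution, and confirm that the $\log \log x$ picked up from the approximation of $1/\log(x-p)$ really is the dominant source of error. The $\log\log x$ factor is an intrinsic feature of splitting at $p = x - x/\log x$ and could likely be removed with a sharper treatment of the tail range, but the statement as written is adequate for the authors' purposes.
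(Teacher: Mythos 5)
Your argument is correct, and it starts from the same identity $Q(x)=\sum_{p\le x}\pi(x-p)$ as the paper, but the execution is genuinely different in two places. For the decomposition, the paper uses a three-range split, discarding $p\le x/\log x$ (bounded by $\pi(x)\,\pi(x/\log x)\ll x^2/\log^3x$) and $x-\sqrt x\le p\le x$ (bounded trivially by $x^{3/2}$); you use a single cut at $p=x-x/\log x$ and control the tail by the short-interval count $\pi(x)-\pi(x-x/\log x)\ll x/\log^2x$ times the trivial bound $\pi(x-p)\le x-p<x/\log x$. Both give $O(x^2/\log^3x)$, and your two-range version is leaner: the paper needs the small-$p$ exclusion only because its later integral estimate requires $\log t\gg\log x$, a constraint your method never encounters. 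For the main term, the paper writes $\pi(x-p)=\li(x-p)+\cdots$, partially sums to reach $\int\pi(t)/\log(x-t)\,dt$, and evaluates that integral; you instead freeze $1/\log(x-p)$ at $1/\log x$ (costing the relative error $\log\log x/\log x$, which is the dominant error term in both proofs, though it enters at the large-$p$ end for you and at the small-$p$ end for the paper) and reduce everything to the closed-form asymptotic $\sum_{p\le y}p=y^2/(2\log y)+O(y^2/\log^2y)$. This is more elementary and avoids the $\li$-based integral manipulations. The two steps you flag as requiring verification --- the short-interval estimate and the partial summation for $\sum_{p\le y}p$ --- are indeed routine consequences of $\pi(y)=y/\log y+O(y/\log^2y)$, so there is no gap.
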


\begin{proof}
We begin by writing
\[
Q(x) = \sum_{p\le x} \pi(x-p) = \sum_{x/\log x \le p \le x-\sqrt x} \pi(x-p) + O\bigg( \sum_{p\le x/\log x} \pi(x-p) + \sum_{x-\sqrt x\le p\le x} \pi(x-p) \bigg).
\]
Trivially $\pi(x-p) \le \pi(x) \le x$, so
\begin{align}
  Q(x) &= \sum_{x/\log x \le p \le x-\sqrt x} \pi(x-p) + O\bigg( \sum_{p\le x/\log x} \pi(x) + \sum_{x-\sqrt x\le p\le x} x \bigg) \notag \\
  &= \sum_{x/\log x \le p \le x-\sqrt x} \pi(x-p) + O\bigg( \pi(x) \pi\bigg( \frac x{\log x} \bigg) + x\sqrt x \bigg) \notag \\
  &= \sum_{x/\log x \le p \le x-\sqrt x} \pi(x-p) + O\bigg( \frac{x^2}{\log^3x} \bigg).
  \label{deal with main term}
\end{align}
In the main term, the prime number theorem gives
\[
\sum_{x/\log x\le p\le x-\sqrt x} \pi(x-p) = \sum_{x/\log x \le p \le x-\sqrt x} \bigg( \li(x-p) + O\bigg( \frac{x-p}{\log^2(x-p)} \bigg) \bigg)
\]
(we could insert a better error term, but it would not improve the final result). Since $x-p \ge \sqrt x$, we have $\log(x-p) \gg \log x$ and so
\begin{align*}
&= \sum_{x/\log x \le p \le x-\sqrt x} \li(x-p) + O\bigg( \sum_{x/\log x \le p \le x-\sqrt x} \frac x{\log^2x} \bigg) \\
&= \sum_{x/\log x \le p \le x-\sqrt x} \li(x-p) + O\bigg( \frac x{\log^2x} \pi(x) \bigg) \\
&= \sum_{x/\log x \le p \le x-\sqrt x} \li(x-p) + O\bigg( \frac{x^2}{\log^3x} \bigg),
\end{align*}
which transforms equation~\eqref{deal with main term} into
\begin{equation}
Q(x) = \sum_{x/\log x \le p \le x-\sqrt x} \li(x-p) + O\bigg( \frac{x^2}{\log^3x} \bigg).
  \label{li sum}
\end{equation}

Using partial summation, we have
\begin{align*}
  \sum_{x/\log x \le p \le x-\sqrt x} & \li(x-p) = \int_{x/\log x}^{x-\sqrt x} \li(x-t) \,d\pi(t) \\
  &= \pi(x-\sqrt x)\li(\sqrt x) - \pi\bigg( \frac x{\log x} \bigg) \li\bigg( x - \frac x{\log x} \bigg) + \int_{x/\log x}^{x-\sqrt x} \frac{\pi(t)}{\log(x-t)} \,dt,
\end{align*}
since the $t$-derivative of $\li(x-t)$ is $-1/\log(x-t)$. In other words,
\begin{align*}
  \sum_{x/\log x \le p \le x-\sqrt x} \li(x-p) &= O\bigg( x\sqrt x + \pi\bigg( \frac x{\log x} \bigg) \li(x) \bigg) + \int_{x/\log x}^{x-\sqrt x} \frac{\pi(t)}{\log(x-t)} \,dt \\
  &= \int_{x/\log x}^{x-\sqrt x} \frac{\pi(t)}{\log(x-t)} \,dt + O\bigg( \frac{x^2}{\log^3x} \bigg),
\end{align*}
and so equation~\eqref{li sum} becomes
\[
Q(x) = \int_{x/\log x}^{x-\sqrt x} \frac{\pi(t)}{\log(x-t)} \,dt + O\bigg( \frac{x^2}{\log^3x} \bigg).
\]
Using the prime number theorem again, this becomes
\begin{align}
  Q(x) &= \int_{x/\log x}^{x-\sqrt x} \frac1{\log(x-t)}\bigg( \frac t{\log t} + O\bigg( \frac t{\log^2t} \bigg) \bigg) \,dt + O\bigg( \frac{x^2}{\log^3x} \bigg) \notag \\
  &= \int_{x/\log x}^{x-\sqrt x} \frac t{(\log t)\log(x-t)} \,dt + O\bigg( \int_{x/\log x}^{x-\sqrt x} \frac t{(\log^2t) \log(x-t)}\,dt + \frac{x^2}{\log^3x} \bigg).
  \label{two integrals}
\end{align}
In the error term, again $\log(x-t) \gg \log x$ and $\log^2t \gg \log^2x$ due to the endpoints of integration, and so the entire integral is $\ll x^2/\log^3x$. In the main term, we have
\[
\log x \ge \log t \ge \log \frac x{\log x} = \log x - \log\log x = (\log x) \bigg( 1+O\bigg( \frac{\log\log x}{\log x} \bigg) \bigg),
\]
and therefore equation~\eqref {two integrals} becomes
\begin{equation}
  Q(x) = \frac1{\log x}\bigg( 1+O\bigg( \frac{\log\log x}{\log x} \bigg) \bigg) \int_{x/\log x}^{x-\sqrt x} \frac t{\log(x-t)} \,dt + O\bigg( \frac{x^2}{\log^3x} \bigg).
  \label{one integral}
\end{equation}
Finally,
\begin{align}
  \int_{x/\log x}^{x-\sqrt x} \frac t{\log(x-t)} \,dt &= \int_0^{x-2} \frac t{\log(x-t)} \,dt + O\bigg( \int_0^{x/\log x} t \,dt + \int_{x-\sqrt x}^{x-2} t \,dt \bigg) \notag \\
  &= \int_2^x \frac{x-u}{\log u} \,du + O\bigg( \frac{x^2}{\log^2x} \bigg) \notag \\
  &= x\li(x) - \int_2^x \frac u{\log u}\,du + O\bigg( \frac{x^2}{\log^2x} \bigg).
  \label{strange li}
\end{align}
By integration by parts, this integral is
\begin{align*}
  \int_2^x \frac u{\log u}\,du &= \frac{u^2}2 \frac1{\log u} \bigg|_2^x + \int_2^x \frac{u^2}2 \frac1{u\log^2 u} \,du \\
  &= \frac{x^2}{2\log x} + O\bigg( 1 + \int_2^{\sqrt x} \frac u{\log^2 u}\,du + \int_{\sqrt x}^x \frac u{\log^2u} \,du \bigg) \\
  &= \frac{x^2}{2\log x} + O\bigg( \sqrt x \cdot x + x \frac x{\log^2 x} \bigg) = \frac{x^2}{2\log x} + O\bigg( \frac{x^2}{\log^2 x} \bigg).
\end{align*}
Therefore equation~\eqref{strange li} becomes
\[
\int_{x/\log x}^{x-\sqrt x} \frac t{\log(x-t)} \,dt = x\li(x) - \frac{x^2}{2\log x} + O\bigg( \frac{x^2}{\log^2x} \bigg) = \frac{x^2}{2\log x} + O\bigg( \frac{x^2}{\log^2x} \bigg)
\]
by the fact that $\li(x) = x/\log x + O(x/\log^2 x)$. Using this in equation~\eqref{one integral} finally yields
\begin{align*}
  Q(x) &= \frac1{\log x}\bigg( 1+O\bigg( \frac{\log\log x}{\log x} \bigg) \bigg) \bigg( \frac{x^2}{2\log x} + O\bigg( \frac{x^2}{\log^2x} \bigg) \bigg) + O\bigg( \frac{x^2}{\log^3x} \bigg) \\
  &= \frac{x^2}{2\log^2 x} + O\bigg( \frac{x^2\log\log x}{\log^3x} \bigg),
\end{align*}
as claimed.
\end{proof}

Equipped with Lemma \ref{Q lemma}, we are now prepared to prove Theorem~\ref{UnconditionalABound}.

\begin{proof}[Proof of Theorem \ref{UnconditionalABound}]
Starting with the definitions of $a(m)$ and $A(M)$, we have
\begin{equation*}
A(M) = \sum_{m=1}^{2M} a(m) = \sum_{m=1}^{2M} \sum_{d\mid m} R(d) = \sum_{m=1}^{2M} \sum_{d\mid m} \sum_{p+q=d} 1 = \sum_{p+q \le 2M} \sum_{\substack{1\le m\le 2M \\ (p+q)\mid m}} 1.
\end{equation*}
Writing $m=(p+q)n$, we obtain
\begin{equation}
A(M) = \sum_{p+q \le 2M} \sum_{1\le n\le 2M/(p+q)} 1 = \sum_{1\le n\le M/2} \sum_{p+q \le 2M/n} 1 = \sum_{1\le n\le M/2} Q\bigg( \frac{2M}{p+q} \bigg).
\label{telescoped}
\end{equation}
The trivial bound $Q(x) \le x^2$ allows us to write
\[
A(M) = \sum_{1\le n\le \log^3M} Q\bigg( \frac{2M}n \bigg) + O\bigg( \sum_{n > \log^3M} \bigg( \frac{2M}n \bigg)^2 \bigg) = \sum_{1\le n\le \log^3M} Q\bigg( \frac{2M}n \bigg) + O\bigg( \frac{M^2}{\log^3M} \bigg),
\]
since $\sum_{n>\log^3M} n^{-2} \ll 1/\log^3M$ by comparison with an integral. We use Lemma~\ref{Q lemma} to get
\begin{align*}
A(M) &= \sum_{1\le n\le \log^3M} \bigg( \frac{(2M/n)^2}{2\log^2(2M/n)} + O\bigg( \frac{(2M/n)^2\log\log(2M/n)}{\log^3(2M/n)} \bigg) \bigg) + O\bigg( \frac{M^2}{\log^3M} \bigg) \\
&= 2M^2 \sum_{1\le n\le \log^3M} \frac1{\log^2(2M/n)}\frac1{n^2} + O\bigg( \sum_{1\le n\le\log^3M} \frac{\sqrt{2M}\log\log2M}{\log^32M} \bigg( \frac{2M}n \bigg)^{3/2} + \frac{M^2}{\log^3M} \bigg),
\end{align*}
since $\sqrt x\log\log x/\log^3x$ is an (eventually) increasing function of~$x$. By the convergence of $\sum_n n^{-3/2}$, we obtain
\[
A(M) = 2M^2 \sum_{1\le n\le \log^3M} \frac1{\log^2(2M/n)}\frac1{n^2} + O\bigg( \frac{M^2\log\log M}{\log^3M} \bigg).
\]
Finally, we have $\log(2M/n) = \log M - \log(n/2) = \log M + O(\log(\log^3M)) = (\log M)(1 + O(\log\log M/\log M))$ as before. Therefore
\[
A(M) = \frac{2M^2}{\log^2M} \bigg(1 + O\bigg( \frac{\log\log M}{\log M} \bigg) \bigg) \sum_{1\le n\le \log^3M} \frac1{n^2} + O\bigg( \frac{M^2\log\log M}{\log^3M} \bigg).
\]
We conclude that
\begin{align*}
A(M) &= \frac{2M^2}{\log^2M} \bigg(1 + O\bigg( \frac{\log\log M}{\log M} \bigg) \bigg) \bigg( \zeta(2) + O\bigg( \frac1{\log^3M} \bigg) \bigg) + O\bigg( \frac{M^2\log\log M}{\log^3M} \bigg) \\
&= \frac{\pi^2M^2}{3\log^2M} + O\bigg( \frac{M^2\log\log M}{\log^3M} \bigg),
\end{align*}
as desired.
\end{proof}

\end{document}